\newtheorem{thm}{Theorem}[section]
\newtheorem{cor}[thm]{Corollary}
\newtheorem{lem}[thm]{Lemma}
\newtheorem{prop}[thm]{Proposition}
\theoremstyle{definition}
\newtheorem{defn}[thm]{Definition}
\theoremstyle{remark}
\newtheorem{rem}[thm]{Remark}
\newtheorem{ex}[thm]{Example}
\numberwithin{equation}{section}
\newcommand{\norm}[1]{\left\Vert#1\right\Vert}
\newcommand{\To}{\longrightarrow}
\begin{document}

\title[Nonexistence of QC maps between certain spaces]{Nonexistence of quasiconformal maps between certain metric measure spaces}

\renewcommand{\subjclassname}{%
 \textup{2010} Mathematics Subject Classification}
\subjclass[]{
30L10,  
53C17, 
28A12.  
}

\author[F\"assler]{Katrin F\"assler}

\address[F\"assler]{Department of Mathematics and Statistics, Gustaf H\"allstr\"omin
 katu 2b, FI-00014-University of Helsinki, Finland}
\email{katrin.fassler@helsinki.fi}

\author[Koskela]{Pekka Koskela}

\author[Le Donne]{Enrico Le Donne}

\address[Koskela and Le Donne]{Department of Mathematics and Statistics, P.O. Box 35,
FI-40014,
University of Jyv\"askyl\"a, Finland}%

\email{pkoskela@maths.jyu.fi}
\email{enrico.ledonne@jyu.fi}

\thanks{K.F. was supported by the Academy of Finland, project number 252293.}

\maketitle

\begin{abstract}
We provide new conditions that  ensure that two metric measure spaces are not quasiconformally equivalent. As an application we deduce that there exists no quasiconformal map between the sub-Riemannian Heisenberg and roto-translation groups.
\end{abstract}

\section{Introduction}

The metric definition of quasiconformality can be formulated for maps between  arbitrary metric spaces, and a rich theory has been developed on metric measure spaces with controlled geometry, see for example
\cite{MR1654771,MR1869604,Balogh_Koskela_Rogovin,Williams, HKNT_book}. In this context, one would like to decide whether two given spaces $(X,d_X,\mu_X)$ and $(Y,d_Y,\mu_Y)$ are quasiconformally equivalent. In the case where $X$ and $Y$ are Carnot groups (endowed with their sub-Riemannian distances),
Pansu has shown in \cite{MR979599} that they are quasiconformally homeomorphic if and only if they are isomorphic. The reason comes from the fact that a quasiconformal map between two Carnot groups is differentiable almost everywhere and the differential is a group isomorphism. Margulis and Mostow \cite{MR1334873} have generalized Pansu's differentiability theorem to a vast class of sub-Riemannian manifolds.
As a consequence, if two sub-Riemannian Lie groups are quasiconformally equivalent, then necessarily the respective tangent cones have to be isomorphic.

Yet this is not a sufficient condition. As an example we will present two sub-Riemannian Lie groups which have the same tangent cones yet are not (globally) quasiconformally equivalent. The first one is the standard sub-Riemannian Heisenberg group $\mathbb{H}^1$. The second one is the universal cover of  $SE(2)$, i.e., the group of orientation-preserving isometries of the Euclidean $2$-space. When endowed with the standard left-invariant sub-Riemannian structure we call such a metric space the (universal cover of the sub-Riemannian) \emph{roto-translation group} and denote it by $\mathcal{RT}$. The space $\mathcal{RT}$ is not a Carnot group and its tangent cone at every point is $\mathbb{H}^1$.

Another general obstruction to  the existence of a quasiconformal homeomorphism is a
different capacity at infinity. Namely, if two
 metric spaces of locally $Q$-bounded geometry are quasiconformally equivalent and one of them has zero $Q$-capacity at infinity then the other one has
 zero $Q$-capacity at infinity  as well, see \cite{MR1654771, Zorich_GAFA, HKNT_book}.
Such a fact can be used to prove that
the Riemannian $m$-th Heisenberg group $\mathbb{H}^m$
and
the Euclidean space
$\mathbb{R}^{2m+1}$
are not quasiconformally equivalent, since
$\mathbb{H}^m$ has positive $(2m+1)$-capacity at infinity while the same capacity for $\mathbb{R}^{2m+1}$ is zero.
 Actually,
it can be shown that the only quasiregular maps  from $\mathbb{R}^{2m+1}$
to the Riemannian $\mathbb{H}^m$ are constant; see the discussion in \cite[p.627]{aHoRi92}.
Regarding the problem of how to show that the two sub-Riemannian spaces
$\mathcal{RT}$ and $\mathbb{H}^1$
are not
quasiconformally equivalent,
the method of looking at the capacity at infinity fails. Indeed,
both spaces are  of locally $4$-bounded geometry and their  $4$-capacity at infinity is zero
since the volume of balls grows at most as the $4$-th power of the radius, see
\cite{MR1654771, HKNT_book}.
In addition, we point out that
   these spaces exhibit different volume growths at large scale. The different geometric behaviour at small and large scales a priori does not rule out the existence of quasiconformal maps.
   Indeed, it is easy to give examples of
   quasiconformally equivalent spaces with different volume growth on the large. See Section
 \ref{examples} where   examples are discussed.


So in general, the existence of a quasiconformal map $f:X\to Y$ between spaces of locally $Q$-bounded geometry is possible, even if
$Y$  has volume growth with exponent $Q$ at large scale and
$X$ has volume growth with exponent $N$ at large scale with $N < Q$.
Imposing the additional condition that $X$ contains a continuously and quasi-isometrically embedded copy of $\mathbb{R}$ and that $Y$ is proper and has a Loewner function blowing up at  zero, we shall prove that there cannot exist a quasiconformal homeomorphism between $X$ and $Y$. This is the content of Theorem \ref{t:main_qi_embed} below, which applies in particular to $X=\mathcal{RT}$ and $Y=\mathbb{H}^1$ endowed with their standard sub-Riemannian metrics.

\begin{thm}\label{t:main_qi_embed}
Let $Q>1$. Assume that $X=(X,d_X,\mu_X)$ is a metric measure space such that
\begin{itemize}
\item $X$ is of locally $Q$-bounded geometry,
\item there exists a continuous quasi-geodesic $\sigma:\mathbb{R}\to X$,
\item there exists
$R_0>0$, $N< Q$, and $C_0 >0$ such that
\begin{equation}\label{lower:volume:growth}
\mu(B(\sigma(0),r))\leq C_0 r^N,\quad \text{for all }r\geq R_0.
\end{equation}
\end{itemize}
Let further $Y=(Y,d_Y,\mu_Y)$ be a metric measure space with Loewner function $\phi_Q$ such that
\begin{itemize}
\item $Y$ is proper, i.e., its closed balls are compact,
\item $Y$ is of locally $Q$-bounded geometry,
\item $\lim_{t\to 0} \phi_Q(t)=\infty$.
\end{itemize}
Then $X$ and $Y$ are not quasiconformally equivalent.
\end{thm}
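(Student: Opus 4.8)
The plan is to argue by contradiction. Suppose $f\colon X\to Y$ is a quasiconformal homeomorphism and write $g=f^{-1}$. Two inputs will be used throughout. First, since both spaces are of locally $Q$-bounded geometry, $f$ and $g$ quasi-preserve the $Q$-modulus of curve families: there is $K\ge 1$ with $K^{-1}\operatorname{Mod}_Q(\Gamma)\le\operatorname{Mod}_Q(f\Gamma)\le K\operatorname{Mod}_Q(\Gamma)$ for every curve family $\Gamma$ in $X$. Second, $f$ and $g$ are quasisymmetric, say $f$ is $\eta_f$-quasisymmetric and $g$ is $\eta_g$-quasisymmetric; this is the point where one passes from modulus estimates to metric distortion estimates, using that $Y$ is proper and $Q$-Loewner with $\phi_Q(t)\to\infty$ as $t\to 0$, together with the bounded geometry of both spaces. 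Finally note that $X$ is unbounded, since $d_X(\sigma(t),\sigma(0))\ge L^{-1}|t|-L\to\infty$; hence $Y$ is unbounded as well, for otherwise $Y$ would be compact (being proper and bounded) and then $X\cong Y$ would be compact.

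\textbf{Step 1: a scale-free lower modulus bound in $Y$.} Put $p_0=\sigma(0)$, $q_0=f(p_0)$ and $\gamma=f\circ\sigma$. Since $g$ maps compact sets to compact sets and $\sigma$ leaves every ball of $X$, the curve $\gamma$ is proper, so $d_Y(\gamma(t),q_0)\to\infty$ as $|t|\to\infty$. For $\rho>0$ let $a_\rho<0<b_\rho$ be the times at which $\gamma$ first reaches $d_Y(\,\cdot\,,q_0)=\rho/2$ in each time direction, and set
\[
E_\rho:=\gamma([a_\rho,b_\rho])\subseteq\overline B_Y(q_0,\rho/2),\qquad F_\rho:=Y\setminus B_Y(q_0,\rho).
\]
Then $q_0\in E_\rho$ and $E_\rho$ meets $\partial B_Y(q_0,\rho/2)$, so $\operatorname{diam}E_\rho\in[\rho/2,\rho]$; the subarc of $\gamma$ from $\gamma(b_\rho)$ to the first time $\gamma$ reaches radius $\rho$ stays in $\overline B_Y(q_0,\rho)$, so $\operatorname{dist}(E_\rho,F_\rho)\in[\rho/2,2\rho]$; and $\operatorname{diam}F_\rho=\infty$ since $Y$ is unbounded. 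Hence the relative distance $\Delta(E_\rho,F_\rho)=\operatorname{dist}(E_\rho,F_\rho)/\min\{\operatorname{diam}E_\rho,\operatorname{diam}F_\rho\}$ lies in the fixed interval $[1/2,4]$, independent of $\rho$, and the Loewner property gives a constant $c_0>0$ with $\operatorname{Mod}_Q(\Gamma(E_\rho,F_\rho))\ge c_0$ for all $\rho$. Transporting by $g$,
\[
\operatorname{Mod}_Q\bigl(\Gamma(gE_\rho,gF_\rho)\bigr)\ge c_0/K\qquad\text{for all }\rho>0,
\]
where $gE_\rho=\sigma([a_\rho,b_\rho])$ and $gF_\rho=X\setminus V_\rho$ with $V_\rho:=f^{-1}(B_Y(q_0,\rho))$.

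\textbf{Step 2: a fast-decaying modulus in $X$, and the contradiction.} Since $Y$ is proper, $\overline B_Y(q_0,\rho)$ is compact, so $f^{-1}(\overline B_Y(q_0,\rho))$ is a compact, hence bounded, subset of $X$; let $R_\rho:=\max\{d_X(p_0,x):x\in f^{-1}(\overline B_Y(q_0,\rho))\}$, attained at some point $x^\ast_\rho$. Then $V_\rho\subseteq B_X(p_0,R_\rho)$, and $R_\rho\to\infty$ because $\bigcup_\rho f^{-1}(\overline B_Y(q_0,\rho))=X$ is unbounded; in particular $\mu_X(V_\rho)\le\mu_X(B_X(p_0,R_\rho))\le C_0R_\rho^{\,N}$ for $\rho$ large. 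On the other hand $\sigma([a_\rho,b_\rho])$ is a compact subset of the open set $V_\rho$, so $s_\rho:=\operatorname{dist}_X(\sigma([a_\rho,b_\rho]),X\setminus V_\rho)>0$, and $s_\rho^{-1}\mathbf 1_{V_\rho}$ is admissible for $\Gamma(gE_\rho,gF_\rho)$, whence
\[
\operatorname{Mod}_Q\bigl(\Gamma(gE_\rho,gF_\rho)\bigr)\le\mu_X(V_\rho)/s_\rho^{\,Q}\le C_0R_\rho^{\,N}/s_\rho^{\,Q}.
\]
It therefore suffices to prove $s_\rho\ge c\,R_\rho$ for a fixed $c>0$ and all large $\rho$: then the right-hand side is $\le c^{-Q}C_0R_\rho^{\,N-Q}\to 0$ since $N<Q$, contradicting Step 1. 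Here quasisymmetry enters twice. First, because $\Delta(E_\rho,F_\rho)\ge 1/2$ and $f$ is $\eta_f$-quasisymmetric, quasisymmetric maps quasi-preserve relative distances of continua, so $\Delta(gE_\rho,gF_\rho)\ge c_1>0$ with $c_1$ depending only on $\eta_f$; since $\operatorname{diam}_X(X\setminus V_\rho)=\infty$, this says $s_\rho\ge c_1\operatorname{diam}_X\sigma([a_\rho,b_\rho])$. Second, applying the quasisymmetry inequality for $g$ with centre $q_0$ to the pair $f(x^\ast_\rho)$ (with $d_Y(f(x^\ast_\rho),q_0)\le\rho$) and $\gamma(b_\rho)$ (with $d_Y(\gamma(b_\rho),q_0)=\rho/2$) gives $R_\rho/d_X(p_0,\sigma(b_\rho))\le\eta_g(2)$, hence $\operatorname{diam}_X\sigma([a_\rho,b_\rho])\ge d_X(p_0,\sigma(b_\rho))\ge R_\rho/\eta_g(2)$. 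Combining, $s_\rho\ge (c_1/\eta_g(2))R_\rho$, which is the required bound; the contradiction completes the proof.

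\textbf{Expected main obstacle.} The computations in Steps 1--2 are routine once one has two facts about $f$: quasi-invariance of $Q$-modulus, and quasisymmetry (the latter being what makes the scale-matching in Step 2 work — turning ``$\sigma([a_\rho,b_\rho])$ sits $Y$-deep in $B_Y(q_0,\rho)$'' into ``it sits $X$-deep, on the scale $R_\rho$, in $V_\rho$'', and controlling $\mu_X(V_\rho)$ by $R_\rho^{\,N}$). The substantive work is to justify quasisymmetry of a metric quasiconformal map between spaces of locally $Q$-bounded geometry with proper $Q$-Loewner target; this is precisely where the hypothesis $\lim_{t\to 0}\phi_Q(t)=\infty$ is needed, in order to upgrade the modulus inequalities to genuine metric distortion bounds. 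Everything else reduces to the modulus estimates above, applied in the special case $X=\mathcal{RT}$, $Y=\mathbb{H}^1$ (whose relevant data — local $4$-bounded geometry of $\mathcal{RT}$, a left-invariant quasi-geodesic line in $\mathcal{RT}$, polynomial volume growth of $\mathcal{RT}$ of degree $N<4$, and the $4$-Loewner property of $\mathbb{H}^1$ with $\phi_4(t)\to\infty$ — are recorded separately).
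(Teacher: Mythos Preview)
Your strategy and the paper's share the same backbone --- contradict modulus quasi-invariance by finding a family whose $Q$-modulus is controlled on one side and blows up on the other --- but the implementations diverge in an essential way. The paper places the two continua $E$ and $F$ in $X$ as the two unbounded rays of the quasi-geodesic $\sigma$, and shows directly that the joining family has finite $Q$-modulus by exhibiting an explicit admissible density $\rho(x)\asymp d_X(x,x_0)^{-1}$ outside a fixed ball; the $L^Q$-integrability of this $\rho$ is exactly where the sub-$Q$ volume growth is spent. On the $Y$-side only the Loewner blow-up and properness are used, applied to genuine continua $f(E_n),f(F_n)$. No quasisymmetry enters anywhere.

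Your argument, by contrast, hinges on global quasisymmetry of $f$ and $g=f^{-1}$, and you correctly flag this as the ``expected main obstacle''. The problem is that this obstacle is not merely a deferred citation: under the stated hypotheses it does not obviously hold. The standard route from quasiconformality to global quasisymmetry needs a global Loewner-type lower bound on the \emph{domain} and global upper regularity on the \emph{target}; here $X$ has only \emph{locally} $Q$-bounded geometry and, by the paper's own Proposition on boundedness of the Loewner function, such an $X$ has bounded (not blowing-up) Loewner function, so the Loewner hypothesis on the domain side of $f$ simply fails. For $g:Y\to X$, the Loewner blow-up on $Y$ is only assumed at $t\to 0$, and $X$ has no global upper $Q$-regularity. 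Without global quasisymmetry your scale-matching $s_\rho\ge cR_\rho$ has no justification, and the argument collapses. Two smaller technical points compound this: your $F_\rho=Y\setminus B_Y(q_0,\rho)$ is neither compact nor necessarily connected, so the Loewner function (defined for pairs of continua) and the ``QS maps quasi-preserve relative distance of continua'' lemma do not literally apply to $(E_\rho,F_\rho)$; and the hypothesis $\phi_Q(t)\to\infty$ as $t\to0$ gives no information at $t\in[1/2,4]$, so the uniform lower bound $c_0>0$ in Step~1 is not guaranteed either.

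The fix is to do what the paper does: keep the continua in $X$ (the two rays of $\sigma$), and replace the crude density $s_\rho^{-1}\mathbf 1_{V_\rho}$ by the scale-invariant density $\rho(x)\sim d_X(x,x_0)^{-1}$, whose admissibility uses only that $\sigma$ is a quasi-geodesic and whose $Q$-integrability uses only the $N<Q$ volume growth. Then the $Y$-side needs nothing beyond properness and $\phi_Q(t)\to\infty$, and quasisymmetry is never invoked.
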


We will show how to deduce from the theorem the following consequence.
\begin{cor}\label{RTvsH}
The sub-Rie{\-}mannian roto-translation group $\mathcal{RT}$ and the sub-Rie{\-}mannian Heisenberg group $\mathbb{H}^1$ are not quasiconformally equivalent.
\end{cor}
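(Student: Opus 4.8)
The plan is to contradict quasiconformality by tracking how a hypothetical quasiconformal map $f\colon X \to Y$ would have to distort the modulus (or Loewner function) of certain curve families. First I would set up the geometric picture on the source side: using the continuous quasi-geodesic $\sigma\colon\mathbb{R}\to X$, fix a point $x_0 = \sigma(0)$ and for each large $r$ consider the two "ends" of $\sigma$, i.e.\ the pieces $\sigma([r,\infty))$ and $\sigma((-\infty,-r])$, which are disjoint continua each of infinite diameter escaping to infinity, together with a fixed continuum near $x_0$. Because $\sigma$ is quasi-isometric, these two ends lie, roughly, at distance comparable to $r$ from each other, yet the volume bound \eqref{lower:volume:growth} says all of this happens inside a ball of measure at most $C_0 r^N$. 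The heuristic is that a thin-volume region connecting two far-apart ends forces the $Q$-modulus of the connecting curve family to be \emph{small} (it is squeezed through a set of small measure relative to the scale $r$), quantitatively of order $r^{N-Q} \to 0$ since $N < Q$.

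Next I would transport this to the target. A metric-definition quasiconformal homeomorphism between spaces of locally $Q$-bounded geometry is quasisymmetric on compact sets and, more to the point, quasipreserves $Q$-modulus of curve families (this is the standard Heinonen--Koskela theory, available once both spaces are $Q$-Loewner/$Q$-bounded geometry — one should cite \cite{MR1654771,HKNT_book}). So the images $f(\sigma([r,\infty)))$, $f(\sigma((-\infty,-r])) $ and $f$ of the fixed continuum are three continua in $Y$ whose connecting $Q$-modulus is comparable to the source modulus, hence still tending to $0$ as $r\to\infty$. The point now is to contradict this using the Loewner hypothesis $\lim_{t\to 0}\phi_Q(t) = \infty$. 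The image of the quasi-geodesic under a quasisymmetric map is still, locally, a nondegenerate continuum; since $Y$ is proper and $f$ is a homeomorphism onto $Y$, the two image ends are unbounded continua. One then chooses the reference continuum $E$ near $f(x_0)$ and the "tail" continuum $F$ equal to one image end; these are two disjoint continua, and I would argue that their relative distance $\Delta(E,F) = \operatorname{dist}(E,F)/\min\{\operatorname{diam} E,\operatorname{diam} F\}$ can be made \emph{arbitrarily small} — either because $F$ comes arbitrarily close to $E$ (the image end returns near $f(x_0)$) or, more robustly, by choosing $E$ to be a large continuum (a long initial segment of $f\circ\sigma$) whose diameter grows, so that the ratio $\to 0$. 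Then the Loewner estimate $\operatorname{mod}_Q(E,F) \ge \phi_Q(\Delta(E,F)) \to \infty$, directly contradicting the fact that this same modulus is comparable to the source modulus, which we showed tends to $0$.

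Carrying this out requires assembling three quantitative estimates: (i) the upper bound $\operatorname{mod}_Q(\Gamma_r) \lesssim r^{N-Q}$ for the source curve family $\Gamma_r$ joining the two ends of $\sigma$ outside $B(x_0,r)$, proved via a test function supported on an annulus and the volume bound \eqref{lower:volume:growth} combined with local $Q$-bounded geometry to control intermediate scales; (ii) the quasi-invariance of $Q$-modulus under $f$, giving $\operatorname{mod}_Q(f\Gamma_r) \lesssim \operatorname{mod}_Q(\Gamma_r) \to 0$; and (iii) the Loewner lower bound on the target forcing $\operatorname{mod}_Q(f\Gamma_r)$ to stay bounded below (indeed to blow up) for a suitable choice of the fixed continuum, using properness of $Y$ to guarantee the image ends are genuine unbounded continua and a compactness/quasisymmetry argument to control diameters. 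The main obstacle I anticipate is step (i): turning the bare volume growth hypothesis into a genuine modulus upper bound requires care, because one must connect the two ends through a nested sequence of dyadic annuli, estimate the modulus of each annular "crossing" family using the local doubling/Poincaré structure, and then sum a geometric-type series whose ratio is controlled by $N/Q < 1$; keeping the quasi-geodesic constants and the locally-$Q$-bounded-geometry constants from contaminating this sum is the delicate bookkeeping. A secondary subtlety is verifying that the target hypotheses genuinely apply to $Y = \mathbb{H}^1$ — that its Loewner function blows up at $0$ and it is proper and of locally $4$-bounded geometry — and that $X = \mathcal{RT}$ admits a continuous quasi-geodesic (a horizontal lift of a line) with the sub-quartic volume growth \eqref{lower:volume:growth}; these are the verifications needed to deduce Corollary~\ref{RTvsH}.
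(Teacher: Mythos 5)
Your overall strategy is the paper's: compare the $Q$-modulus of a curve family attached to the quasi-geodesic in the source with the Loewner lower bound for its image in the target, and use quasi-invariance of modulus to get a contradiction. But as written there is a genuine mismatch in the comparison step. The family whose modulus you bound above in the source is the family $\Gamma_r$ of curves joining the two ends $\sigma((-\infty,-r])$ and $\sigma([r,\infty))$; the pair to which you then apply the Loewner estimate in the target is a \emph{different} pair (a fixed continuum near $f(x_0)$, or a long initial segment of $f\circ\sigma$, against one image end). Quasi-invariance only compares $M_Q(\Gamma)$ with $M_Q(f(\Gamma))$ for the \emph{same} $\Gamma$, so ``this same modulus'' in your last step is not the same modulus, and the contradiction does not close. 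The fix is the paper's choice: take $E_n=\sigma([-n,-t_1])$ and $F_n=\sigma([t_1,n])$, compact sub-arcs of the two rays kept at a fixed positive distance $\geq b$ by the quasi-isometry inequality. Then $\operatorname{dist}(f(E_n),f(F_n))\leq\operatorname{dist}(f(E_1),f(F_1))$ by nestedness while $\operatorname{diam} f(E_n),\operatorname{diam} f(F_n)\to\infty$ by properness, so the relative distance of the image pair tends to $0$ and $M_Q(f(\Gamma_n))\to\infty$; meanwhile $M_Q(\Gamma_n)\leq M_Q(\Gamma)<\infty$. Note that you only need \emph{uniform boundedness} of the source moduli, not the decay $r^{N-Q}\to 0$; this makes your anticipated ``main obstacle'' (chaining dyadic annuli with Poincar\'e-type estimates) unnecessary. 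A single explicit density, $\rho=c_0$ on $B(x_0,R_1)$ and $\rho=c_1/d_X(x,x_0)$ outside, is admissible for the full family $\Gamma$ (admissibility uses only the quasi-geodesic inequalities to show every joining curve is long) and lies in $L^Q$ by Cavalieri's principle together with \eqref{lower:volume:growth} and $N<Q$; no local Poincar\'e structure enters the upper bound at all.

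The second gap is that the corollary is precisely about $\mathcal{RT}$ and $\mathbb{H}^1$, and you only flag the needed verifications as ``subtleties'' without supplying them. That $\mathbb{H}^1$ is a proper Ahlfors $4$-regular $4$-Loewner space (hence $\phi_4(t)\to\infty$ as $t\to 0$ by Theorem \ref{t:growth}) is standard, but the source-side hypotheses for $\mathcal{RT}$ require real arguments: locally $4$-bounded geometry of $\mathcal{RT}$ is obtained from an explicit global contactomorphism $f:(\mathcal{RT},\alpha)\to(\mathbb{H}^1,\beta)$ which is locally bi-Lipschitz for the Carnot--Carath\'eodory metrics, combined with the isometric homogeneity of $\mathcal{RT}$ to get uniform constants; and both the cubic volume growth ($N=3<4$) and the existence of a continuous quasi-geodesic follow from the fact that $\mathrm{id}:(\mathbb{R}^3,d_E)\to(\mathbb{R}^3,d_{RT})$ is a quasi-isometry (so Euclidean lines are quasi-geodesics and $B_{RT}(p,r)\subseteq B_E(p,Lr+b)$), together with the fact that $d_{RT}$ is geodesic so quasi-geodesics can be taken continuous. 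Without these, the corollary is not yet deduced from the general theorem.
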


The paper is organized as follows. In the next section we recall the  definitions
mentioned in Theorem \ref{t:main_qi_embed}
 and some useful general results.
In Section \ref{sec:proof} we prove the main result: Theorem \ref{t:main_qi_embed}, and we deduce few consequences. 
Section \ref{examples} is devoted to show  how sharp Theorem \ref{t:main_qi_embed} is. Namely,  we illustrate that it is not possible to remove from  the assumptions  the existence of a quasi-geodesic, or the properness of the range, or the divergence of the Loewner function. Finally, in Section \ref{Application} we recall the definitions and some properties of the sub-Riemannian Heisenberg group
and the sub-Riemannian roto-translation group. We end with the proof of Corollary \ref{RTvsH}.

\subsection*{Acknowledgements}
Corollary \ref{RTvsH} answers a question posed to us by
 Kirsi Peltonen.
 The discussion with her was the initial motivation for the research leading to this paper.
 We thank her for sharing with us the problem in question.
\section{Preliminaries}

\begin{defn}\label{def:quasiconformal}
A  \emph{quasiconformal} map between two metric spaces $(X,d_X)$ and $(Y,d_Y)$ is a homeomorphism $f:X\to Y$ for which there exists a finite constant $K\geq 1$ such that for all $x\in X$,
\begin{displaymath}
H(x):=\limsup_{r \to 0}\frac{\sup_{d_X(x,x')\leq r}d_Y(f(x),f(x'))}{\inf_{d_X(x,x')\geq r}d_Y(f(x),f(x'))}\leq K.
\end{displaymath}
\end{defn}
An important tool in the study of quasiconformal maps is the modulus of a curve family.
\begin{defn}\label{def:modulus} Let $\Gamma$ be a family of curves in a metric measure space $(X,d,\mu)$, where $\mu$ is a nontrivial Borel regular measure.
A Borel function $\rho:X \to [0,\infty]$ is said to be \emph{admissible} for $\Gamma$, and we write $\rho \in \mathrm{adm}(\Gamma)$, if $\int_{\gamma} \rho ds \geq 1$ for all locally rectifiable $\gamma \in \Gamma$. The \emph{$Q$-modulus}, for $1\leq Q<\infty$, of $\Gamma$ is then defined as
\begin{displaymath}
M_Q(\Gamma):=\inf_{\rho\in\mathrm{adm}(\Gamma)} \int \rho^Q \;\mathrm{d}\mu.
\end{displaymath}
\end{defn}
In \cite{MR1869604} it is shown that the $Q$-modulus is a quasi-invariant for quasiconformal maps between spaces of locally $Q$-bounded geometry, see Theorem \ref{quasi-invariant:modulus} below. We recall Definition 9.1 from \cite{MR1869604} with the modification as in Remark 9.4(b).
\begin{defn}\label{bounded geometry}
A metric measure space $(X,d,\mu)$ is of \emph{locally $Q$-bounded geometry}, $Q>1$, if $X$ is separable, pathwise connected, locally compact, and if there exists a constant $C_0 \geq 1$ and a decreasing function $\phi: (0,\infty) \to (0,\infty)$ such that each point in $X$ has a neighbourhood $U$ (with compact closure in $X$) so that
\begin{enumerate}
\item  $\mu(B_R)\leq C_0 R^Q$ whenever $B_R \subset U$ is a ball of radius $R>0$
\item $M_Q(\Gamma_{E,F})\geq \phi(t)$ whenever $B_R \subset U$ is a ball of radius $R>0$ and $E$, $F$ are two 
 continua in $B_R$ with $$0<\mathrm{dist}(E,F) \leq t \cdot \mathrm{min}\{\mathrm{diam}E,\mathrm{diam}F\}.$$ Here, $\Gamma_ {E,F}$ denotes the family of closed paths joining $E$ and $F$, that is, it consists of all continuous functions $\gamma:[0,1]\to X$ such that  $\gamma(0)\in E$ and $\gamma(1)\in F$.
\end{enumerate}
\end{defn}


\begin{thm}[{\cite[Theorem 9.8]{MR1869604}}]\label{quasi-invariant:modulus}
If $f:X\to Y$ is a homeomorphism between two spaces of locally $Q$-bounded geometry, with $Q>1$, then $f$ is quasiconformal (as in Definition \ref{def:quasiconformal}) if and only if there exists a constant $K'>0$ for which
$$\dfrac{1}{K'}M_Q(\Gamma)<M_Q(f (\Gamma)) <K' M_Q(\Gamma) ,$$
for all curve families $\Gamma$ in $X$.
\end{thm}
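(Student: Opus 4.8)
The plan is to prove the two implications separately, exploiting the two structural features packaged into local $Q$-bounded geometry: condition (1), the local volume upper bound $\mu(B_R)\le C_0R^Q$, which produces \emph{upper} estimates for the $Q$-modulus of spherical ring families via the logarithmic test function; and condition (2), the Loewner-type \emph{lower} bound on the modulus of families joining nearby continua. For each point $x$ I would fix the neighbourhood $U$ from Definition \ref{bounded geometry} and work only at scales small enough that all balls involved (and, by continuity of $f$, their images) lie inside the corresponding good neighbourhoods, so that (1) and (2) are simultaneously available.

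For the implication that the two-sided modulus inequality forces metric quasiconformality, I would run a ring-comparison at each $x$. Writing $y_0=f(x)$, $L(x,r)=\sup_{d_X(x,x')\le r}d_Y(y_0,f(x'))$ and $l(x,r)=\inf_{d_X(x,x')\ge r}d_Y(y_0,f(x'))$, one has the inclusions $B_Y(y_0,l(x,r))\subseteq f(B(x,r))\subseteq B_Y(y_0,L(x,r))$. Set $I=f^{-1}(\bar B_Y(y_0,l(x,r)))$ and $O=f^{-1}(Y\setminus B_Y(y_0,L(x,r)))\cap \bar B(x,R)$ for a fixed small $R$ with $B(x,R)\subseteq U$; since $f$ is a homeomorphism these are continua, and by the definitions of $L,l$ both meet $\bar B(x,r)$ while having diameter $\gtrsim r$, so $\mathrm{dist}_X(I,O)\le t\cdot\min\{\mathrm{diam}\,I,\mathrm{diam}\,O\}$ for a bounded $t$. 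Condition (2) then gives $M_Q(\Gamma_{I,O})\ge c>0$. But $f(\Gamma_{I,O})$ is contained in the family joining $\bar B_Y(y_0,l(x,r))$ to $Y\setminus B_Y(y_0,L(x,r))$, so the two-sided modulus inequality and monotonicity give a lower bound $c/K'$ for the modulus of this spherical ring family, whereas the test function $\rho(y)=\bigl(d_Y(y_0,y)\log(L/l)\bigr)^{-1}$ on the annulus, estimated through dyadic shells and (1), yields the upper bound $C\bigl(\log(L(x,r)/l(x,r))\bigr)^{1-Q}$. Since $Q>1$, comparing the two caps $L(x,r)/l(x,r)$ by a constant depending only on $K'$ and the structure constants, and letting $r\to0$ gives $H(x)\le K$.

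For the converse I would follow the analytic route. From $H(x)\le K$ together with the upper volume bound (1) I would first show that $f$ is absolutely continuous on $Q$-almost every curve, belongs to the local Newtonian class $N^{1,Q}_{\mathrm{loc}}(X;Y)$ with $L_f(x)=\limsup_{r\to0}L(x,r)/r$ serving as a $Q$-weak upper gradient, and satisfies Lusin's condition (N). The crucial pointwise estimate to establish is $L_f(x)^Q\le C\,J_f(x)$, with $C$ depending only on $K$ and the structure, where $J_f(x)=\limsup_{r\to0}\mu_Y(f(B(x,r)))/\mu_X(B(x,r))$ is the volume derivative; this ties the $Q$-th power of the infinitesimal stretch to the volume distortion. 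Given $\rho_Y\in\mathrm{adm}(f(\Gamma))$, the pullback $\rho_X=(\rho_Y\circ f)\,L_f$ is admissible for $\Gamma$, because $ds_Y\le L_f\,ds_X$ along $Q$-almost every curve and the exceptional family is $Q$-negligible; the pointwise estimate and the change-of-variables inequality $\int_X(h\circ f)\,J_f\,d\mu_X\le\int_Y h\,d\mu_Y$ then give $\int_X\rho_X^Q\,d\mu_X\le C\int_Y\rho_Y^Q\,d\mu_Y$, whence $M_Q(\Gamma)\le C\,M_Q(f(\Gamma))$. Applying the same reasoning to $f^{-1}$, whose dilatation is bounded as a consequence of $H_f\le K$, supplies the reverse comparison and hence the full two-sided bound (the strictness of the inequalities in the statement being inessential and absorbable into the constant).

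I expect the analytic converse to be the main obstacle: extracting Newtonian–Sobolev regularity, Lusin's condition (N), and above all the pointwise inequality $L_f^Q\le C\,J_f$ from the purely metric hypothesis $H(x)\le K$ is delicate, relying on Vitali-type covering arguments, careful control of null sets under $f$, and the interplay between the volume upper bound and the infinitesimal distortion. The ring-comparison direction is far more robust; its only genuine subtlety is verifying that $I$ and $O$ are continua of comparable diameter at controlled separation, for which the linear local connectivity implicit in condition (2) is essential.
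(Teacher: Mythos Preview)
The paper does not prove this statement at all: Theorem~\ref{quasi-invariant:modulus} is quoted verbatim from \cite[Theorem 9.8]{MR1869604} and used as a black box in the proof of Theorem~\ref{t:main_qi_embed}. There is therefore no ``paper's own proof'' to compare your proposal against.

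That said, your sketch follows the strategy of the cited source reasonably faithfully. The ring-comparison direction is essentially the correct idea; the main point requiring care is your assertion that $I$ and $O$ are continua of diameter $\gtrsim r$ meeting $\bar B(x,r)$. As stated, $I=f^{-1}(\bar B_Y(y_0,l(x,r)))$ is the preimage of a closed ball and there is no a priori reason for it to be connected; one typically works instead with a connected component containing $x$, or invokes the linear local connectivity that is a consequence of condition~(2) in Definition~\ref{bounded geometry}, exactly as you allude to at the end. For the analytic converse your outline is accurate but, as you yourself acknowledge, the hard content lies in establishing Sobolev regularity and the pointwise distortion inequality $L_f^Q\le C\,J_f$ from the infinitesimal hypothesis $H(x)\le K$ alone; this is the substance of several sections of \cite{MR1869604} and cannot be compressed into a paragraph. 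Your proposal is thus a fair high-level roadmap of the Heinonen--Koskela argument rather than a self-contained proof, which is appropriate given that the present paper treats the result as imported.
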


To ensure that the function  $\phi(t)$ in Definition \ref{bounded geometry} goes to $\infty$ as $t\to 0$, one can assume that the metric measure space is
Ahlfors $Q$-regular
and
$Q$-Loewner, see Theorem \ref{t:growth}.

\begin{defn} Let $Q>1$.
A metric measure space $(X,d,\mu)$ is \emph{Ahlfors $Q$-regular} if $\mu$ is a Borel regular measure on $X$ such that there exists a constant $C>0$ such that
for all closed balls $B_R$ of radius $0<R<\mathrm{diam}X$,
\begin{displaymath}
C^{-1} R^Q \leq \mu(B_R) \leq C R^Q.
\end{displaymath}
\end{defn}

\begin{defn}
Let $Q>1$ and let $(X,d,\mu)$ be a metric measure space. The {\em Loewner function}  is defined as
\begin{displaymath}
\phi_Q(t):= \inf M_Q(\Gamma_{E,F}),\quad\text{for all }t>0,
\end{displaymath}
where the infimum is taken over all 
 continua $E,F\subseteq X$ with
\begin{displaymath}
0<\mathrm{dist}(E,F) \leq t \min \{\mathrm{diam}E,\mathrm{diam}F\}.
\end{displaymath}
We call $(X,d,\mu)$  a \emph{$Q$-Loewner space} if it is pathwise connected and the  Loewner function is strictly positive.

\end{defn}

\begin{thm}[{\cite[Theorem 3.6]{MR1654771}}]\label{t:growth}
Let $Q>1$ and let $(X,d,\mu)$ be an Ahlfors $Q$-regular $Q$-Loewner space. Then the Loewner function behaves asymptotically as
\begin{displaymath}
\phi_Q(t)\simeq \log(1/t),\quad t\to 0.
\end{displaymath}
\end{thm}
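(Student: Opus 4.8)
The goal is the two-sided comparison: there exist constants $0<c\le C<\infty$ and $t_0\in(0,1)$ so that $c\log(1/t)\le \phi_Q(t)\le C\log(1/t)$ for all $t\in(0,t_0)$. The lower bound is where the Loewner hypothesis (strict positivity of $\phi_Q$) enters, while the upper bound is essentially a construction exploiting Ahlfors regularity together with the connectivity forced by the Loewner property. Throughout, the relevant data attached to a competing pair of continua $E,F$ are $d:=\mathrm{dist}(E,F)$ and $\Delta:=\min\{\mathrm{diam}\,E,\mathrm{diam}\,F\}$ with $d\le t\Delta$, so that $\log(1/t)\simeq \log(\Delta/d)$ and the number of dyadic scales between $d$ and $\Delta$ is $\simeq\log(1/t)$. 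The $\log(1/t)$ rate will emerge as exactly this count of scales, weighted by a uniform per-scale modulus estimate.

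For the lower bound, fix a competing pair $E,F$ and a point $p\in E$ realizing $\mathrm{dist}(p,F)=d$. For $j=0,1,\dots,J$ with $J\simeq\log_2(\Delta/d)$ set $r_j=2^jd$ and consider the pairwise disjoint annuli $A_j=\overline{B}(p,r_{j+1})\setminus B(p,r_j)$. Since $p\in E$ and $F$ has a point within distance $d\le r_j$ of $p$, and since both continua have diameter $\Delta\gg r_{j+1}$, connectedness forces each of $E$ and $F$ to cross $A_j$; thus $E\cap\overline{A}_j$ and $F\cap\overline{A}_j$ contain sub-continua $E_j,F_j$ of diameter $\gtrsim r_j$ lying in the set $\overline{A}_j$ of diameter $\simeq r_j$, so their relative distance is bounded by a fixed constant $\kappa$. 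The Loewner property then gives $M_Q(\Gamma_{E_j,F_j})\ge\phi_Q(\kappa)=:c_0>0$, uniformly in $j$. Each $\Gamma_{E_j,F_j}$ is a subfamily of $\Gamma_{E,F}$, and the annuli are disjoint, so superadditivity of modulus over families supported in disjoint sets (the parallel law) suggests $M_Q(\Gamma_{E,F})\gtrsim J c_0\simeq \log(1/t)$. The point that needs care — and which I regard as the main obstacle — is that a curve joining $E_j$ to $F_j$ need not remain in $A_j$, so the parallel law does not apply to $\Gamma_{E_j,F_j}$ verbatim: one must instead work with the subfamily of such curves \emph{confined} to $\overline{A}_j$ and verify that its modulus is still bounded below by a uniform constant. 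This is a localized version of the Loewner estimate, and establishing it (by combining the global Loewner bound with the linear local connectivity available in Ahlfors-regular Loewner spaces) is the technical heart of the lower bound; once it is in hand, restricting any admissible $\rho$ to each $\overline{A}_j$ and summing gives the claim.

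For the upper bound, since $\phi_Q$ is an infimum it suffices to exhibit, for each small $t$, a single competing pair with connecting modulus $\le C\log(1/t)$. Note that one cannot hope for such a bound for \emph{every} pair — two concentric spheres at distance $d$ have connecting modulus $\simeq d^{1-Q}$ — so the configuration must be chosen with care. Fix $\Delta>0$, set $d=t\Delta$, and use the connectivity of the space to choose continua $E,F$ of diameter $\simeq\Delta$ that come within distance $d$ of each other near a single point $p$ but stay uniformly separated (at distance $\gtrsim\Delta$) away from $p$. On the annular region $\{d\le \mathrm{dist}(x,p)\le \Delta\}$ define the logarithmic test function $\rho(x)=c\,\max\{d,\mathrm{dist}(x,p)\}^{-1}$, and $\rho=0$ elsewhere. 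The separation of $E$ and $F$ away from $p$ guarantees that a connecting curve can be short only if it passes through the small scales near $p$, where $\rho$ is large, which is what makes $\rho$ admissible for a suitable absolute $c\simeq1$; verifying this is where the geometry of the chosen configuration genuinely enters. Ahlfors $Q$-regularity gives $\mu(\{r\le \mathrm{dist}(\cdot,p)\le 2r\})\simeq r^Q$, so summing over dyadic shells,
\[
\int_X \rho^Q\,d\mu \simeq c^Q\sum_{k:\,2^kd\le\Delta} (2^kd)^{-Q}\,(2^kd)^{Q}\simeq \log(\Delta/d)\simeq \log(1/t),
\]
which bounds $M_Q(\Gamma_{E,F})$, and hence $\phi_Q(t)$, from above by $C\log(1/t)$.

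In summary, the crux throughout is the interaction between the global Loewner inequality and the confinement of curves to annular shells: for the lower bound one needs the Loewner estimate to survive localization to a single shell, and for the upper bound one needs a configuration whose geometry forces connecting curves into the shells where the logarithmic test function is large. I expect the localized Loewner estimate in the lower bound to be the main obstacle, with the admissibility check for the upper-bound configuration a secondary but nontrivial point; both rest on the connectivity properties that the Loewner and Ahlfors-regularity hypotheses supply.
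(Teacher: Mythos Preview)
The paper does not prove Theorem~\ref{t:growth}; it is stated with attribution to \cite[Theorem~3.6]{MR1654771} and a pointer to the discussion in \cite[Section~8]{bHe01}, but no argument is supplied in the present paper. There is therefore no proof here to compare your sketch against.

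For what it is worth, the strategy you outline is the standard one in the cited sources: a dyadic annular decomposition around a point of near-contact for the lower bound, and a logarithmic test density $\rho(x)\simeq \mathrm{dist}(x,p)^{-1}$ on a suitably chosen pair for the upper bound. You have also correctly isolated the two places where real work is needed. For the lower bound, the passage from ``$\Gamma_{E_j,F_j}\subset\Gamma_{E,F}$ for each $j$'' to a \emph{sum} over $j$ indeed fails unless one restricts to curves confined to the shells, and the required localized Loewner estimate relies on the linear local connectedness that $Q$-regularity together with the $Q$-Loewner property provides. For the upper bound, admissibility of the logarithmic density is not automatic: with $\rho\equiv 0$ outside $B(p,\Delta)$ as you wrote it, curves joining $E$ to $F$ that escape $B(p,\Delta)$ are not controlled, so one must either extend $\rho$ onto a slightly larger ball or choose $E,F$ so that every connecting curve is forced through the small scales near $p$. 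Both issues are handled in the references cited by the paper; your outline becomes a proof once those lemmas are imported.
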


Regarding Theorem \ref{t:growth}, we also refer the reader to the comments in   \cite[Section 8]{bHe01}. 
 In our application, Theorem \ref{t:growth} will ensure  that for a particular sequence of curve families $(\Gamma_n')_n$ in the target space $Y$, the corresponding sequence of moduli $M_Q(\Gamma_n')$ tends to infinity as $n\to \infty$. At the same time we impose a condition on the source space $X$ guaranteeing that $M_Q(\Gamma_n)$ with $\Gamma_n'=f(\Gamma_n)$ is uniformly bounded for any homeomorphism $f:X\to Y$, whence $f$ cannot be quasiconformal. Such an extra condition on    $X$ is to have volume growth with exponent $N<Q$ at large scale and to contain a continuously and quasi-isometrically embedded copy of $\mathbb{R}$.

\begin{defn} Let $(X,d_X)$ and $(Y,d_Y)$ be metric spaces.
A not necessarily continuous map $h:X\to Y$ is a \emph{quasi-isometric embedding} if there exist constants $L>0$ and $b>0$ such that for all $x,x'\in X$, one has
\begin{displaymath}
L^{-1} d_X(x,x')-b\leq d_Y(h(x),h(x'))\leq L d_X(x,x')+b.
\end{displaymath}
\end{defn}

A quasi-isometric embedding $\sigma:\mathbb{R}\to Y$ is called a {\em quasi-geodesic} of $Y$.
We point out that, in the case when $Y$ is a length space, the presence of a
quasi-geodesic ensures the
existence of a {\em continuous}
quasi-geodesic, see Lemma \ref{continuous:gamma}.

%

\section{Proof of the main theorem and some consequences}\label{sec:proof}

 We start by explaining the idea of the proof of
Theorem \ref{t:main_qi_embed}.
We shall use Theorem \ref{quasi-invariant:modulus}, i.e,  the quasi-invariance of the $Q$-modulus under quasiconformal maps between spaces of locally $Q$-bounded geometry.
 We  consider an arbitrary homeomorphism $f:X\to Y$. We will provide a nested sequence of curve families $\Gamma_1 \subseteq \Gamma_2 \subseteq \Gamma_3 \subseteq \ldots\subseteq \Gamma$ in $X$ such that
\begin{equation}\label{eq:mod_goal}
M_Q(\Gamma_n)\leq M_Q(\Gamma)<\infty\quad\text{and}\quad \lim_{n\to \infty}M_Q(f(\Gamma_n))=\infty.
\end{equation}
More precisely, we give sequences of continua $(E_n)_n$ and $(F_n)_n$ in $X$ so that for each $n\in \mathbb{N}$, the set $E_n$ is disjoint from  $F_n$ and $E_n\subseteq E_{n+1}\subseteq E$, $F_n\subseteq F_{n+1}\subseteq F$, where $E=\cup_n E_n$ and $F=\cup_n F_n$ are unbounded sets in $X$ such that $\Gamma$ is the family  of all closed paths connecting $E$ and $F$ and has finite $Q$-modulus. This will imply \eqref{eq:mod_goal}, which shows that $f$ cannot be quasiconformal according to Theorem \ref{quasi-invariant:modulus}.

The idea is to choose $E$ and $F$ as disjoint rays on an unbounded quasi-geodesic curve $\sigma$ -- which exists by assumption. Then, roughly speaking, because $\sigma$ is a quasi-geodesic, any curve $\gamma$ joining $E$ to $F$ cannot be too short. This will give that a certain density $\rho$ in $L^Q(X,\mu_X)$ is admissible for $\Gamma$ and thus
\begin{displaymath}
M_Q(\Gamma)\leq \int_X \rho^Q\;\mathrm{d}\mu_X <\infty.
\end{displaymath}
At the same time, since $f$ is assumed to be a homeomorphism, the sets $f(E_n)$ and $f(F_n)$ are disjoint nondegenerate continua in $Y$. As $Y$ is proper and the sets $E$ and $F$ are unbounded, we get $\lim_{n\to \infty} \mathrm{diam} (f(E_n))=\infty$ and $\lim_{n\to \infty} \mathrm{diam}( f(F_n))=\infty$. By the asymptotic behavior of the Loewner function $\lim_{n\to \infty}M_Q(f(\Gamma_n))=\infty$ for the family $\Gamma_n$ of closed paths connecting $E_n$ to $F_n$.

We now make these steps more precise.

Since $\sigma:\mathbb{R}\to X$ is quasi-isometric, there exist constants $b>0$ and $L\geq 1$ such that
\begin{displaymath}
L^{-1}|t-t'|-b \leq d_X(\sigma(t),\sigma(t'))\leq L|t-t'|+b
\end{displaymath}
for all $t,t'\in \mathbb{R}$. Notice that even if the inequalities  hold with $b=0$, that is, if we had a bi-Lipschitz embedding, we still choose a positive constant $b$ for later use.
Hence, if $t\in (-\infty,  -Lb]$ and $t'\in [ Lb,\infty)$, then $|t-t'|\geq 2Lb$ and thus $d_X(\sigma(t),\sigma(t'))\geq b>0$. In other words,
\begin{equation}\label{eq:cont_dist}
\mathrm{dist}(\sigma((-\infty,-Lb]),\sigma([Lb,+\infty)))\geq b.
\end{equation}
Next, we set
\begin{equation}\label{eq:d_R_0}
R_1:= \max\{R_0,2b(L^2+2)\}\quad \text{and } t_1:=L(b+R_1).
\end{equation}
The condition $R_1 \geq R_0$ ensures that $\mu_X(B(x_0,r))\leq C_0 r^N$ for $r\geq R_1$, where $x_0:= \sigma(0)$. The choice of the second term in the maximum will become clear later; eventually it guarantees that the length of a curve $\gamma \in \Gamma$ is appropriately bounded from below.

For $t\in (-\infty, -t_1]\cup [t_1,+\infty)$ we have that
\begin{displaymath}
d_X(\sigma(t),x_0)=d_X(\sigma(t),\sigma(0))\geq L^{-1}|t|-b\geq R_1,
\end{displaymath}
and thus
\begin{equation}
\sigma((-\infty, -t_1]\cup [t_1,+\infty))\subseteq X\setminus 
{B}(x_0,R_1).
\end{equation}
This motivates the   definitions
\begin{equation}\label{eq:E_n_F_n}
E_n:= \sigma([-n, -t_1]),\quad F_n:=\sigma([t_1,+n])
\end{equation}
and
\begin{equation}\label{eq:E_F}
E:= \sigma((-\infty, -t_1]),\quad F:= \sigma([t_1,+\infty)).
\end{equation}
We let $\Gamma$ be the family of all closed paths in $X$ connecting $E$ to $F$, and accordingly, $\Gamma_n \subseteq \Gamma$ shall consist of all closed paths connecting $E_n$ to $F_n$. Notice that \eqref{eq:cont_dist} ensures that the continua in the considered pairs are well-separated.

We plan to show that the $Q$-modulus of the just-defined  $\Gamma$ is finite.
For doing so we need to construct an admissible density and show that it  is $Q$-integrable.

 \begin{lem}\label{l:rho_adm_l:rho_int}
 There exists a Borel function $\rho$ such that
 \begin{displaymath}
 \rho \in \mathrm{adm}(\Gamma) \qquad \text{ and }\qquad
\int_X \rho^Q \; \mathrm{d}\mu_X < \infty.
\end{displaymath}
\end{lem}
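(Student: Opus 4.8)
My plan is to produce the admissible density explicitly, namely
$$\rho(z)\ :=\ \frac{8L^{2}}{\max\{R_{1},\,d_{X}(z,x_{0})\}},\qquad z\in X,\qquad x_{0}:=\sigma(0),$$
and to verify the two requirements in turn. The heart of the matter is a lower bound for the length of curves in $\Gamma$. A locally rectifiable $\gamma\in\Gamma$ joins some $x=\sigma(s)$ with $s\le -t_{1}$ to some $y=\sigma(u)$ with $u\ge t_{1}$, so $x,y\in X\setminus B(x_{0},R_{1})$ by the inclusion already established, and in particular $D:=\max\{d_{X}(x,x_{0}),d_{X}(y,x_{0})\}\ge R_{1}$. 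Since $s$ and $u$ have opposite signs, $|s-u|=|s|+|u|$; applying the upper quasi-isometry bound to the pairs $(s,0)$ and $(u,0)$ gives $|s|\ge L^{-1}(d_{X}(x,x_{0})-b)$ and $|u|\ge L^{-1}(d_{X}(y,x_{0})-b)$, and because $d_{X}(x,x_{0}),d_{X}(y,x_{0})\ge R_{1}\ge 2b$ these are at least $\tfrac{1}{2L}d_{X}(x,x_{0})$ and $\tfrac{1}{2L}d_{X}(y,x_{0})$; feeding this into the lower quasi-isometry bound $d_{X}(x,y)\ge L^{-1}(|s|+|u|)-b$ and using $d_{X}(x,x_{0})+d_{X}(y,x_{0})\ge 2R_{1}\ge 4bL^{2}$ to absorb the additive term, I obtain
$$\mathrm{length}(\gamma)\ \ge\ d_{X}(x,y)\ \ge\ \frac{d_{X}(x,x_{0})+d_{X}(y,x_{0})}{4L^{2}}\ \ge\ \frac{D}{4L^{2}}.$$
(For curves of infinite length the estimates below give $\int_{\gamma}\rho\,ds=\infty$, so one may assume $\mathrm{length}(\gamma)<\infty$; curves that are not locally rectifiable impose no condition.)

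With this in hand, admissibility follows from a dichotomy on how far $\gamma$ travels from $x_{0}$. If $\gamma\subseteq B(x_{0},2D)$, then $\max\{R_{1},d_{X}(z,x_{0})\}\le 2D$ along $\gamma$, so $\rho\ge 8L^{2}/(2D)$ there and $\int_{\gamma}\rho\,ds\ge\frac{8L^{2}}{2D}\cdot\frac{D}{4L^{2}}=1$. Otherwise $t\mapsto d_{X}(\gamma(t),x_{0})$ takes a value $>2D$ while one endpoint of $\gamma$ has value $\le D$, so by the intermediate value theorem $\gamma$ contains a subcurve $\gamma'$ lying in the closed annulus $\overline B(x_{0},2D)\setminus B(x_{0},D)$ and joining its two bounding spheres; there $\mathrm{length}(\gamma')\ge D$ and $\rho\ge 8L^{2}/(2D)\ge 1/D$, whence $\int_{\gamma}\rho\,ds\ge\int_{\gamma'}\rho\,ds\ge 1$. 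Hence $\rho\in\mathrm{adm}(\Gamma)$.

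For the $Q$-integrability I would split $X$ into $B(x_{0},R_{1})$ and the dyadic annuli $A_{k}:=B(x_{0},2^{k+1}R_{1})\setminus B(x_{0},2^{k}R_{1})$, $k\ge 0$. On $B(x_{0},R_{1})$ one has $\rho=8L^{2}/R_{1}$ and, since $R_{1}\ge R_{0}$, $\mu_{X}(B(x_{0},R_{1}))\le C_{0}R_{1}^{N}<\infty$ by \eqref{lower:volume:growth}; on $A_{k}$ one has $\rho\le 8L^{2}/(2^{k}R_{1})$ and $\mu_{X}(A_{k})\le\mu_{X}(B(x_{0},2^{k+1}R_{1}))\le C_{0}(2^{k+1}R_{1})^{N}$. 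Therefore
$$\int_{X}\rho^{Q}\,\mathrm{d}\mu_{X}\ \le\ (8L^{2})^{Q}\Big(C_{0}R_{1}^{N-Q}+C_{0}\,2^{N}R_{1}^{N-Q}\sum_{k\ge 0}2^{k(N-Q)}\Big)\ <\ \infty,$$
the geometric series converging precisely because $N<Q$; this is the only point where the hypothesis $N<Q$ is used.

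The step I expect to be the main obstacle is the length estimate for $\gamma\in\Gamma$: one must ensure the additive constant $b$ from the quasi-isometry inequality is dominated by the distances $d_{X}(\sigma(\pm t_{1}),x_{0})$, and this is exactly what the deliberate choices $R_{1}\ge 2b(L^{2}+2)$ and $t_{1}=L(b+R_{1})$ are designed to guarantee. The rest — the dichotomy, the extraction of the annular subcurve, and the summation — is routine.
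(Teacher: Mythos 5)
Your proof is correct, and its overall architecture coincides with the paper's: both use a density comparable to $\min\{c,\,C/d_X(\cdot,x_0)\}$ truncated on $B(x_0,R_1)$, both derive admissibility from a length lower bound coming from the quasi-isometry of $\sigma$ (with the choices $R_1\ge 2b(L^2+2)$ and $t_1=L(b+R_1)$ serving exactly to absorb the additive constant $b$, as you identified), and both use the hypothesis $N<Q$ only to make the tail of the $L^Q$-norm converge. The execution of the admissibility step is where you genuinely diverge. The paper proves $\ell\ge 2M/c_1$ with $M=\sup_{s}d_X(\gamma(s),x_0)$ the farthest excursion of the \emph{whole} curve, by routing the triangle inequality through the maximizing point $\gamma(\bar s)$, and then splits according to whether the time $\gamma$ spends inside $B(x_0,R_1)$ exceeds $1/c_0$. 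You instead prove the weaker bound $\ell\ge D/(4L^2)$ involving only the \emph{endpoint} distances $D$, and compensate with a dichotomy on whether $\gamma$ stays in $B(x_0,2D)$, handling the escaping case by extracting a subcurve crossing the annulus $\overline B(x_0,2D)\setminus B(x_0,D)$. Your variant is somewhat more modular (the quasi-isometry enters only through the endpoints, and the annulus-crossing step is a standard modulus argument), while the paper's single estimate in terms of $M$ avoids the crossing lemma altogether; both dichotomies play the same structural role. For the integrability, the paper integrates the distribution function via Cavalieri's formula where you sum over dyadic annuli; these are interchangeable. I see no gaps.
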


  \begin{proof}

We set
 \begin{displaymath}
 c_0:= \frac{4}{b}\quad \text{and} \quad c_1:= 4(L^2 +1)
 \end{displaymath}
 and consider the density
 \begin{equation}\label{eq:def_density}
 \rho(x):= \left\{ \begin{array}{ll}c_0 & \text{ for } x\in B(x_0,R_1)\\ \frac{c_1}{d_X(x,x_0)}&\text{ for }x\in X\setminus B(x_0,R_1).\end{array}\right.
 \end{equation}
 The choice of $c_0$ and $c_1$ ensures that $\rho \in \mathrm{adm}(\Gamma)$ as we are going to   show.

Since the modulus of a family of curves does not depend on the parametrizations of the curves,
  without loss of generality, we may assume that each curve in $\Gamma$ is a rectifiable curve and is parametrized according to arc-length by a continuous function $\gamma:[0,\ell] \to X$ such that $\gamma(0)\in E$ and $\gamma(\ell)\in F$.
 Let $\gamma \in \Gamma$. First record that
 \begin{displaymath}
 \ell = \mathrm{length}(\gamma) \geq d_X(\gamma(0),\gamma(\ell))\geq b=\tfrac{4}{c_0}
 \end{displaymath}
 since $\gamma(0)\in E$, $\gamma(\ell)\in F$ and \eqref{eq:cont_dist} holds. Thus $1/c_0 < \ell/2$. Now there are two cases to consider: either
 \begin{displaymath}
 |\{s\in [0,\ell]:\; \gamma(s)\in B(x_0,R_1)\}|\geq \tfrac{1}{c_0},
 \end{displaymath}
 in which case trivially,
 \begin{displaymath}
 \int_{\gamma} \rho\;\mathrm{d}s\geq \int_{\{s\in [0,\ell]:\; \gamma(s)\in B(x_0,R_1)\}}c_0\;\mathrm{d}s\geq \frac{c_0}{c_0}=1,
 \end{displaymath}
 or   necessarily
  \begin{displaymath}
 |\{s\in [0,\ell]:\; \gamma(s)\in X\setminus  B(x_0,R_1)\}|\geq \ell-\tfrac{1}{c_0}.
 \end{displaymath}
 Set $M:= \sup_{s\in [0,\ell]} d_X(\gamma(s),x_0)$.
 By the choice of $c_0$ we have $\ell-\frac{1}{c_0} > \frac{\ell}{2}$ and hence by the above considerations,
 \begin{displaymath}
 \int_{\gamma} \rho\; \mathrm{d}s\geq \left(\ell-\frac{1}{c_0}\right)\frac{c_1}{M} \geq \frac{\ell c_1}{2M}.
 \end{displaymath}
 The admissibility of $\rho$ will thus be proven once we have shown that
 \begin{equation}\label{eq:goal_ell}
 \ell \geq \frac{2M}{c_1}.
 \end{equation}
To this end, we will use the fact that $\sigma$ is a quasi-isometric embedding. Since $s\mapsto d_X(\gamma(s),x_0)$ is a continuous function on a compact set, there exists $\bar s \in [0,\ell]$ such that
 \begin{equation}\label{e:def_M}
 M= d_X(\gamma(\bar s),x_0).
 \end{equation} Moreover, since $\gamma(0)\in E$ and $\gamma(\ell)\in F$, there exist
$s_1\in (-\infty, -t_1]$ and $s_2 \in [t_1,+\infty)$ such that
\begin{displaymath}
\gamma(0)=\sigma(s_1)\quad \text{and}\quad \gamma(\ell)=\sigma(s_2).
\end{displaymath}
Then, by quasi-isometry,
\begin{displaymath}
|s_1|+|s_2|=|s_2-s_1|\leq L(d_X(\sigma(s_1),\sigma(s_2))+b),
\end{displaymath}
which implies, again by quasi-isometry, that
\begin{align*}
2 d_X(\gamma(\bar s),x_0)&
= d_X(\gamma(\bar s),x_0) +d_X(\gamma(\bar s),x_0)\\
&\leq  d_X(\gamma (\bar s),\gamma (0))+d_X(\gamma (0),x_0)+d_X(\gamma (\bar s),\gamma (\ell))+d_X(\gamma (\ell),x_0)\\
&= d_X(\gamma (0),\gamma (\bar s))+d_X(\gamma (\bar s),\gamma (\ell))+d_X(\sigma (s_1),\sigma(0))+d_X(\sigma(0),\sigma(s_2))\\
&\leq d_X(\gamma (0),\gamma (\bar s))+d_X(\gamma (\bar s),\gamma (\ell))+ L(|s_1|+|s_2|)+ 2b\\
&\leq d_X(\gamma (0),\gamma (\bar s))+d_X(\gamma (\bar s),\gamma (\ell)) + L^2 (d_X(\sigma(s_1),\sigma(s_2))+b)+2b\\
&\leq (1+L^2) \left(d_X(\gamma (0),\gamma (\bar s))+d_X(\gamma (\bar s),\gamma (\ell))\right)+ b(L^2+2).
\end{align*}
Hence,
\begin{displaymath}
d_X(\gamma (0),\gamma (\bar s))+d_X(\gamma (\bar s),\gamma (\ell)) \geq \frac{2}{L^2+1} d_X(\gamma(\bar s),x_0)- b \frac{L^2+2}{L^2+1}.
\end{displaymath}
By the definition of $M$ as in \eqref{e:def_M}, we conclude that
\begin{align*}
\ell \geq d_X(\gamma (0),\gamma (\bar s))+d_X(\gamma (\bar s),\gamma (\ell))\geq \frac{2M}{L^2 +1} \left(1- \frac{b}{2M}(L^2 +2)\right)\geq \frac{M}{L^2 +1}\geq \frac{2M}{c_1},
\end{align*}
since $M\geq R_1 > b(L^2 +2)$ and $c_1 > 2(L^2 +1)$. Thus we have established \eqref{eq:goal_ell} as desired. This shows that
\begin{displaymath}
\int_{\gamma} \rho\;\mathrm{d}s \geq 1,\quad\text{for all }\gamma \in \Gamma,
\end{displaymath}
and thus proves the admissibility of the density  $\rho$ for the curve family $\Gamma$.

Next, we will show how the growth bound for $\mu_X$ ensures that the admissible density defined in \eqref{eq:def_density} belongs to $L^Q(X,\mu_X)$.
%
%
We use a consequence of Fubini's theorem, see \cite[1.15]{MR1333890}, to write
\begin{align*}
\int_X &\rho^Q \; \mathrm{d}\mu_X  \\&= \int_{B(x_0,R_1)}c_0^Q \;\mathrm{d}\mu_X + \int_0^{(c_1/R_1)^{Q}} \mu_X(\{x\in X\setminus B(x_0,R_1):\;(\tfrac{c_1}{d_X(x,x_0)})^Q\geq \eta\})\;\mathrm{d}\eta.
\end{align*}
In the sum above, the first integral  is finite, 
since $\mu_X(B(x_0,R_1))<\infty$.
The second integral is estimated from above as
 $$
\int_0^{(c_1/R_1)^{Q}} \mu_X(B(x_0,c_1 \eta^{-\frac{1}{Q}}))\;\mathrm{d}\eta
\leq C_0 c_1^N	 \int_0^{(c_1/R_1)^{Q}} \eta^{-\frac{N}{Q}} \;\mathrm{d}\eta,
$$
since in the integrand $c_1 \eta^{-\frac{1}{Q}} \geq R_1 \geq R_0$.
Here $C_0$ and $R_0$ are
 the constants of the large scale volume growth assumption \eqref{lower:volume:growth}.
Since $N<Q$, the last integral is finite, which concludes the proof of Lemma \ref{l:rho_adm_l:rho_int}.
\end{proof}

 \begin{prop}\label{p:mod_source}
 Assume that $X=(X,d_X,\mu_X)$ is a metric measure space that contains a quasi-geodesic and has a volume growth lower than $Q$ as in
 \eqref{lower:volume:growth}.
 If $E$ and $F$ are the sets defined in \eqref{eq:E_F},
 $E_n$ and $F_n$ are the sets defined in \eqref{eq:E_n_F_n},
and  $\Gamma$ (resp. $\Gamma_n$) is the family of all closed paths in $X$ connecting $E$ to $F$ (resp. connecting $E_n$ to $F_n$), then 
 \begin{eqnarray*}
  M_Q(\Gamma_n)\leq M_Q(\Gamma)<\infty\quad\text{ and }\quad
\dfrac{\mathrm{dist}(E_n,F_n) }{ \min \{\mathrm{diam}E_n,\mathrm{diam}F_n\}} \stackrel{n\to \infty}{\To} 0. 
 \end{eqnarray*}
 \end{prop}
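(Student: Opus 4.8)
The plan is to assemble Proposition \ref{p:mod_source} from the pieces already in place: Lemma \ref{l:rho_adm_l:rho_int} gives the finiteness of $M_Q(\Gamma)$, and the monotonicity of the modulus under inclusion of curve families handles the inequality $M_Q(\Gamma_n)\le M_Q(\Gamma)$. So the first step is to observe that $\Gamma_n\subseteq\Gamma$ (which is immediate from $E_n\subseteq E$, $F_n\subseteq F$ and the definitions \eqref{eq:E_n_F_n}, \eqref{eq:E_F}), whence $\mathrm{adm}(\Gamma)\subseteq\mathrm{adm}(\Gamma_n)$ and therefore $M_Q(\Gamma_n)\le M_Q(\Gamma)$; combined with Lemma \ref{l:rho_adm_l:rho_int}, which produces a $\rho\in\mathrm{adm}(\Gamma)$ with $\int_X\rho^Q\,\mathrm{d}\mu_X<\infty$, we get $M_Q(\Gamma_n)\le M_Q(\Gamma)\le\int_X\rho^Q\,\mathrm{d}\mu_X<\infty$.

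The second step is the asymptotic ratio statement, which is purely a computation with the quasi-isometry constants of $\sigma$. I would estimate the numerator and denominator separately. For the numerator: $E_n=\sigma([-n,-t_1])$ and $F_n=\sigma([t_1,n])$, so by the upper quasi-isometry bound, $\mathrm{dist}(E_n,F_n)\le d_X(\sigma(-t_1),\sigma(t_1))\le L\cdot 2t_1+b$, a constant independent of $n$. (One could equally well bound it by $\mathrm{dist}(E,F)$, but the crude constant bound is all that is needed.) For the denominator: using the lower quasi-isometry bound, $\mathrm{diam}\,E_n\ge d_X(\sigma(-n),\sigma(-t_1))\ge L^{-1}(n-t_1)-b$, and likewise $\mathrm{diam}\,F_n\ge L^{-1}(n-t_1)-b$; hence $\min\{\mathrm{diam}\,E_n,\mathrm{diam}\,F_n\}\ge L^{-1}(n-t_1)-b\to\infty$ as $n\to\infty$. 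Dividing the bounded numerator by the diverging denominator gives $\mathrm{dist}(E_n,F_n)/\min\{\mathrm{diam}\,E_n,\mathrm{diam}\,F_n\}\to 0$.

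There is essentially no hard part here — the proposition is a bookkeeping step that packages the technical Lemma \ref{l:rho_adm_l:rho_int} together with two elementary quasi-isometry estimates. The only point requiring a sliver of care is making sure the continua $E_n,F_n$ are genuinely nondegenerate so that "$\mathrm{diam}$" is meaningful and positive for all large $n$; this holds as soon as $n>t_1$, and for small $n$ (i.e. $n\le t_1$) one simply discards finitely many terms or notes that the sets are empty, which does not affect the limit. I would also remark in passing that $E_n$ and $F_n$ are continua (being continuous images of intervals under $\sigma$, which is continuous by hypothesis) and that they are disjoint by \eqref{eq:cont_dist}, so that the families $\Gamma_n$ are of the type to which the Loewner-function lower bound in Definition \ref{bounded geometry}(2) and the definition of $\phi_Q$ will later be applied in the proof of Theorem \ref{t:main_qi_embed}.
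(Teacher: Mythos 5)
Your proposal is correct and follows the same route as the paper: the finiteness of $M_Q(\Gamma)$ is exactly Lemma \ref{l:rho_adm_l:rho_int} combined with the monotonicity of the modulus under $\Gamma_n\subseteq\Gamma$, and the ratio estimate is the elementary quasi-isometry computation that the paper dismisses as ``a direct consequence of the definitions'' but which you have (correctly) written out, with the bounded numerator $\mathrm{dist}(E_n,F_n)\leq 2Lt_1+b$ and the diverging denominator $\min\{\mathrm{diam}\,E_n,\mathrm{diam}\,F_n\}\geq L^{-1}(n-t_1)-b$.
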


 \begin{proof}
The only     statement  that is not a direct consequence of the definitions is the finiteness of the modulus of $\Gamma$. However, it  immediately follows  from
 Lemma~\ref{l:rho_adm_l:rho_int}. 
\end{proof}

Next, we will study the images $f(\Gamma_n)$ under a homeomorphism $f:X \to Y$. We remark that in the following proposition some assumptions are not actually necessary, e.g., the locally $Q$-bounded geometry and the volume growth
 \eqref{lower:volume:growth}.

\begin{prop}\label{p:mod_image}
 Let $X$ and $Y$ be as in Theorem \ref{t:main_qi_embed}.
If  $\Gamma_n$ is the family of all closed paths in $X$ connecting $E_n$ to $F_n$, which are defined in \eqref{eq:E_n_F_n}, we have
\begin{displaymath}
\lim_{n\to \infty} M_Q(f(\Gamma_n))= \infty,
\end{displaymath}
for any homeomorphism $f:X \to Y$.
\end{prop}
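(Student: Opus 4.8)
The plan is to exploit the properness of $Y$ together with the divergence of the Loewner function $\phi_Q$. First I would observe that since $f:X\to Y$ is a homeomorphism and $E_n,F_n$ are nondegenerate continua (each is the image under the continuous $\sigma$ of a nontrivial compact interval), the images $f(E_n)$ and $f(F_n)$ are nondegenerate continua in $Y$, and they are disjoint because $E_n\cap F_n=\emptyset$ (guaranteed by \eqref{eq:cont_dist}, as $t_1>Lb$). Moreover $f(\Gamma_n)$ is precisely the family of all closed paths in $Y$ joining $f(E_n)$ to $f(F_n)$: a continuous path $\gamma$ connects $E_n$ to $F_n$ in $X$ if and only if $f\circ\gamma$ connects $f(E_n)$ to $f(F_n)$ in $Y$, and this correspondence is a bijection since $f$ is a homeomorphism.

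The crux is to control the three quantities $\mathrm{dist}(f(E_n),f(F_n))$, $\mathrm{diam}(f(E_n))$, and $\mathrm{diam}(f(F_n))$ in order to feed the pair $(f(E_n),f(F_n))$ into the definition of $\phi_Q$. Here is where properness enters. The sets $E=\bigcup_n E_n$ and $F=\bigcup_n F_n$ are unbounded in $X$: indeed $E=\sigma((-\infty,-t_1])$ and $d_X(\sigma(t),x_0)\geq L^{-1}|t|-b\to\infty$ as $t\to-\infty$. Since $f$ is a homeomorphism of $X$ onto $Y$ and $Y$ is proper, the image of an unbounded set must be unbounded: if $f(E)$ were contained in some closed ball $\bar B(y_0,\lambda)$, that ball would be compact, so $f^{-1}(\bar B(y_0,\lambda))$ would be a compact subset of $X$ containing the unbounded set $E$, a contradiction. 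Hence $f(E)$ and $f(F)$ are unbounded, and since $E_n\uparrow E$, $F_n\uparrow F$, we get $\mathrm{diam}(f(E_n))\to\infty$ and $\mathrm{diam}(f(F_n))\to\infty$ as $n\to\infty$.

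Next I would fix $n$ and compare with a suitable reference continuum. For each fixed $m$, the pair $(f(E_m),f(F_m))$ is a pair of disjoint nondegenerate continua, so it has some finite positive distance $\delta_m:=\mathrm{dist}(f(E_m),f(F_m))>0$ and some positive minimal diameter. For $n\geq m$ we have $f(E_m)\subseteq f(E_n)$, $f(F_m)\subseteq f(F_n)$, hence $\mathrm{dist}(f(E_n),f(F_n))\leq\delta_m$ (the distance can only shrink as the sets grow), while $\min\{\mathrm{diam}(f(E_n)),\mathrm{diam}(f(F_n))\}\to\infty$. Therefore, for every fixed $m$,
\begin{displaymath}
t_n:=\frac{\mathrm{dist}(f(E_n),f(F_n))}{\min\{\mathrm{diam}(f(E_n)),\mathrm{diam}(f(F_n))\}}\leq \frac{\delta_m}{\min\{\mathrm{diam}(f(E_n)),\mathrm{diam}(f(F_n))\}}\xrightarrow[n\to\infty]{}0,
\end{displaymath}
so in fact $t_n\to 0$. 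By definition of the Loewner function of $Y$, the family $f(\Gamma_n)=\Gamma_{f(E_n),f(F_n)}$ satisfies $M_Q(f(\Gamma_n))\geq \phi_Q(t_n)$. Finally, since $\phi_Q$ is nonincreasing (being an infimum over a family that enlarges as $t$ grows) and $\lim_{t\to 0}\phi_Q(t)=\infty$ by hypothesis, $t_n\to 0$ gives $\phi_Q(t_n)\to\infty$, hence $M_Q(f(\Gamma_n))\to\infty$.

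The main obstacle, and the step requiring the most care, is the deduction that $f(E_n)$ and $f(F_n)$ have diameters tending to infinity; this is exactly where the \emph{properness} of $Y$ is indispensable (without it the images of unbounded sets could be bounded). One must also be slightly careful that $\phi_Q$ is monotone in the right direction and that the definition of $\phi_Q$ genuinely applies, i.e.\ that $0<\mathrm{dist}(f(E_n),f(F_n))<\infty$ for all large $n$ — positivity from disjointness of compacta, finiteness automatic — so that the defining inequality $0<\mathrm{dist}(E,F)\leq t\min\{\mathrm{diam}E,\mathrm{diam}F\}$ is met with $t=t_n$. Everything else is a routine unwinding of definitions.
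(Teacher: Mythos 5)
Your proof is correct and follows essentially the same route as the paper: unboundedness of $E$ and $F$ plus properness of $Y$ forces $\mathrm{diam}\, f(E_n),\mathrm{diam}\, f(F_n)\to\infty$, the nestedness of the images bounds $\mathrm{dist}(f(E_n),f(F_n))$ from above by $\mathrm{dist}(f(E_1),f(F_1))$, and the divergence of $\phi_Q$ at $0$ finishes the argument. The only additions beyond the paper's proof are the (correct) explicit justifications that images of unbounded sets are unbounded and that $f(\Gamma_n)$ coincides with $\Gamma_{f(E_n),f(F_n)}$.
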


\begin{proof}
Since $\sigma$ is continuous and $[-n, -t_1],[t_1,+n]$ are compact connected sets, also the sets $E_n$ and $F_n$ in $X$ are compact and connected. Moreover, the definition and \eqref{eq:cont_dist} ensure that $E_n$ and $F_n$ are disjoint, in fact at distance at least $b$ from each other.
Since $\sigma$ is a quasi-isometric embedding of $\mathbb{R}$, the sets $E$ and $F$ are unbounded. As $f$ is a homeomorphism and $Y$ is proper, we must then also have
\begin{displaymath}
\lim_{n\to \infty}\mathrm{diam}f(E_n)=\lim_{n\to \infty} \mathrm{diam} f(F_n)=\infty.
\end{displaymath}
Since $f(E_1)\subseteq f(E_2)\subseteq f(E_3)\subseteq \ldots$ (and analogously for the sequence $(f(F_n))_n$), we have
\begin{displaymath}
\mathrm{dist}(f(E_n),f(F_n)) \leq \mathrm{dist} (f(E_1),f(F_1))<\infty
\end{displaymath}
and we find
\begin{displaymath}
\lim_{n\to \infty}\frac{\mathrm{dist}(f(E_n),f(F_n))}{\min\{\mathrm{diam}(f(E_n)),\mathrm{diam}((F_n))\}}=0,
\end{displaymath}
in other words, for each $t>0$, there exists $n(t)\in \mathbb{N}$ such that
\begin{align*}
\phi_Q(t)&:= \inf\left\{M_Q(\Gamma_{E',F'}):\; \tfrac{\mathrm{dist}(E',F')}{\min\{\mathrm{diam}E',\mathrm{diam}F'\}}\leq t\right\}\leq M_Q(f(\Gamma_n))
\end{align*}
for all $n\geq n(t)$. By assumption, we know that
$
\phi_Q(t)\to \infty,$ as $t\to 0.
$
This yields $\lim_{n\to \infty}M_Q(f(\Gamma_n))=\infty$ and thus concludes the proof of Proposition \ref{p:mod_image}.
\end{proof}

\begin{proof}[Proof of Theorem \ref{t:main_qi_embed}]
The theorem  follows from Proposition \ref{p:mod_source} and Proposition \ref{p:mod_image}. Indeed, if $f$ was quasiconformal, according to Theorem \ref{quasi-invariant:modulus} there should exist a finite constant $K'\geq 1$ such that
\begin{displaymath}
M_Q(f(\Gamma_n))\leq K' M_Q(\Gamma_n)\leq K 'M_Q(\Gamma)<\infty, \quad \text{for all }n\in \mathbb{N},
\end{displaymath}
which is impossible.
The proof of Theorem \ref{t:main_qi_embed} is concluded.
\end{proof}

We point out that Proposition \ref{p:mod_source},  used in the first part of the proof of Theorem~\ref{t:main_qi_embed}, gives the following fact, which we will  use later in Example \ref{ex:loewner} to show how necessary is the assumption on the Loewner function in Theorem~\ref{t:main_qi_embed}.


\begin{prop}\label{p:bdd_loewner}
Assume that $X=(X,d_X,\mu_X)$ is a metric measure space
that contains a quasi-geodesic and has a volume growth lower than $Q$ as in
 \eqref{lower:volume:growth}.
Then the $Q$-Loewner function $\phi_Q$ of $X$ is bounded.
\end{prop}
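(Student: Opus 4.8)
The plan is to derive Proposition \ref{p:bdd_loewner} as a direct corollary of Proposition \ref{p:mod_source}, with essentially no extra work beyond unwinding definitions. Recall that $\phi_Q(t)$ is the infimum of $M_Q(\Gamma_{E',F'})$ over all pairs of continua $E', F'$ with $\mathrm{dist}(E',F') \leq t \min\{\mathrm{diam} E', \mathrm{diam} F'\}$. To bound $\phi_Q$, it therefore suffices to exhibit, for arbitrarily small $t > 0$, a single pair of continua realizing a separation ratio at most $t$ whose connecting curve family has uniformly bounded $Q$-modulus. The sets $E_n, F_n$ constructed in \eqref{eq:E_n_F_n} from the quasi-geodesic $\sigma$ are exactly such a family: by Proposition \ref{p:mod_source} their separation ratio $\mathrm{dist}(E_n,F_n)/\min\{\mathrm{diam} E_n, \mathrm{diam} F_n\}$ tends to $0$ as $n \to \infty$, while $M_Q(\Gamma_n) \leq M_Q(\Gamma) < \infty$ for every $n$.

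Concretely, I would argue as follows. Fix any $t > 0$. By the second conclusion of Proposition \ref{p:mod_source}, there exists $n = n(t) \in \mathbb{N}$ with
\begin{displaymath}
\frac{\mathrm{dist}(E_n,F_n)}{\min\{\mathrm{diam} E_n, \mathrm{diam} F_n\}} \leq t.
\end{displaymath}
Since $E_n$ and $F_n$ are continua (continuous images of compact intervals under $\sigma$, using that the theorem's hypothesis supplies a \emph{continuous} quasi-geodesic) satisfying this separation bound, the pair $(E_n, F_n)$ is among those over which the infimum defining $\phi_Q(t)$ is taken, so
\begin{displaymath}
\phi_Q(t) \leq M_Q(\Gamma_{E_n,F_n}) = M_Q(\Gamma_n) \leq M_Q(\Gamma) =: C < \infty,
\end{displaymath}
where $C$ is the finite constant from Lemma \ref{l:rho_adm_l:rho_int}, which does not depend on $t$. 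As $t > 0$ was arbitrary, $\phi_Q$ is bounded above by $C$ on $(0,\infty)$.

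There is essentially no serious obstacle here; the proposition is a repackaging of the source-space estimate. The only point requiring a small amount of care is making sure the $E_n, F_n$ are legitimate competitors in the definition of $\phi_Q$ — that is, that they are genuine continua and that their separation ratio is positive and at most $t$ — but all of this is already recorded in Proposition \ref{p:mod_source} and the surrounding discussion (in particular \eqref{eq:cont_dist} guarantees $\mathrm{dist}(E_n,F_n) \geq b > 0$, so the ratio is a well-defined positive number). One might also note explicitly that $\mathrm{diam} E_n$ and $\mathrm{diam} F_n$ are positive for $n$ large, which again follows since $\sigma$ is a quasi-isometric embedding and the intervals $[-n,-t_1]$, $[t_1,n]$ are nondegenerate for $n > t_1$. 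With these remarks the proof is complete in a couple of lines.
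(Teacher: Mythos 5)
Your proof is correct and takes exactly the route the paper intends: the paper presents Proposition \ref{p:bdd_loewner} as an immediate consequence of Proposition \ref{p:mod_source} with no further argument, and your write-up simply makes that deduction explicit by exhibiting $(E_n,F_n)$ as competitors for $\phi_Q(t)$ with uniformly bounded modulus $M_Q(\Gamma_n)\leq M_Q(\Gamma)<\infty$. The one caveat you flag --- that the quasi-geodesic should be continuous so that $E_n$ and $F_n$ are genuine continua --- is a point the paper also leaves implicit, and your handling of it is appropriate.
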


In Theorem \ref{t:main_qi_embed} we can replace the assumption that the quasi-geodesic is continuous with the stronger assumption that the space $Y$ is a length space, or even a geodesic space.

\begin{lem}\label{continuous:gamma}
If $(X,d_X)$ is a length space and $\widetilde{\sigma}:\mathbb{R}\to X$ a quasi-isometric embedding, then there exists a continuous quasi-isometric embedding $\sigma:\mathbb{R}\to X$.
\end{lem}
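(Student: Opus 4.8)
The plan is to interpolate $\widetilde\sigma$ by concatenating geodesic-like segments between consecutive integer parameter values, and then check that the resulting curve is still a quasi-isometric embedding with (slightly worsened) constants. So first I would fix constants $L\ge 1$, $b>0$ with $L^{-1}|t-t'|-b\le d_X(\widetilde\sigma(t),\widetilde\sigma(t'))\le L|t-t'|+b$ for all $t,t'$. For each $k\in\mathbb Z$, since $X$ is a length space, the points $\widetilde\sigma(k)$ and $\widetilde\sigma(k+1)$ can be joined by a curve of length at most $d_X(\widetilde\sigma(k),\widetilde\sigma(k+1))+1\le L+b+1=:D$; parametrize it by constant speed on $[k,k+1]$ and call this $\sigma|_{[k,k+1]}$. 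Gluing these gives a continuous map $\sigma:\mathbb R\to X$ with $\sigma(k)=\widetilde\sigma(k)$ for all $k\in\mathbb Z$, and with the property that $d_X(\sigma(t),\sigma(k))\le D$ whenever $|t-k|\le 1$; in particular $d_X(\sigma(t),\widetilde\sigma(\lfloor t\rfloor))\le D$ for every $t$.

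Next I would verify the two quasi-isometry inequalities for $\sigma$ by comparing with $\widetilde\sigma$ at the nearest integers. Write $m=\lfloor t\rfloor$, $n=\lfloor t'\rfloor$. For the upper bound, $d_X(\sigma(t),\sigma(t'))\le d_X(\sigma(t),\widetilde\sigma(m))+d_X(\widetilde\sigma(m),\widetilde\sigma(n))+d_X(\widetilde\sigma(n),\sigma(t'))\le 2D+L|m-n|+b\le L|t-t'|+(2D+L+b)$, using $|m-n|\le|t-t'|+1$. For the lower bound, $d_X(\sigma(t),\sigma(t'))\ge d_X(\widetilde\sigma(m),\widetilde\sigma(n))-2D\ge L^{-1}|m-n|-b-2D\ge L^{-1}|t-t'|-(L^{-1}+b+2D)$, using $|m-n|\ge|t-t'|-1$. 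Hence $\sigma$ is a continuous quasi-isometric embedding with constant $L$ and additive constant $b':=2D+L+b$ (taking the larger of the two), which completes the proof.

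The only genuine subtlety — and the step I would be most careful about — is the lower bound near the diagonal: a quasi-geodesic is allowed to be discontinuous and even to "backtrack," so after interpolation there is no guarantee that $\sigma$ is injective or that $d_X(\sigma(t),\sigma(t'))>0$ for $t\ne t'$. This is fine, because the definition of quasi-isometric embedding does not require injectivity; when $t,t'$ lie in the same or adjacent unit intervals the lower bound $L^{-1}|t-t'|-b'$ is simply negative (as $|t-t'|\le 2$ and $b'\ge 2D+L+b$ is large), so the inequality holds vacuously, and for $|t-t'|$ large the estimate above is the real content. One should also note the degenerate possibility that the chosen connecting curve on $[k,k+1]$ is constant (if $\widetilde\sigma(k)=\widetilde\sigma(k+1)$), which is harmless: constant-speed parametrization then just means the constant map on that interval, and all the distance estimates above still go through since $d_X(\sigma(t),\widetilde\sigma(\lfloor t\rfloor))\le D$ trivially holds. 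Finally, although the statement is phrased for $X$ a length space, the argument uses only that any two points at distance $r$ can be joined by a curve of length $\le r+1$; a literal geodesic would let us take $D=L+b$, but it is not needed.
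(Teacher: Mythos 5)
Your proof is correct, and it is the standard interpolation argument that the authors clearly have in mind (the paper itself omits the proof, leaving it as ``a straightforward exercise''): connect $\widetilde\sigma(k)$ to $\widetilde\sigma(k+1)$ by almost-minimizing curves, reparametrize at constant speed, and absorb the resulting bounded perturbation into the additive constant. Your attention to the two degenerate points --- the vacuous lower bound at small parameter separation and the possibly constant connecting segments --- is exactly the right care to take, and nothing further is needed.
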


We leave the straightforward  proof of the above  lemma as an  exercise.

\begin{cor}\label{c:main_thm}
Given $Q>1$, let $X=(X,d_X,\mu_X)$ be a length space of locally $Q$-bounded geometry for which there exists a quasi-isometric embedding $\sigma:\mathbb{R}\to X$ and constants $R_0>0$, $N<Q$ and $C_0 >0$ such that
\begin{displaymath}
\mu(B(x,r))\leq C_0 r^N, \qquad \forall  x\in X, \forall r\geq R_0.
\end{displaymath}
Then $X$ is not quasiconformally equivalent to any proper Ahlfors $Q$-regular $Q$-Loewner space.
\end{cor}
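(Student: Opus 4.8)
The plan is to reduce the statement to Theorem~\ref{t:main_qi_embed} by verifying that its hypotheses are met by the given space $X$ and by an arbitrary proper Ahlfors $Q$-regular $Q$-Loewner space $Y$.

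First I would deal with the source space $X$. By assumption $X$ is of locally $Q$-bounded geometry and admits a quasi-isometric embedding $\widetilde\sigma\colon\mathbb{R}\to X$. Since $X$ is a length space, Lemma~\ref{continuous:gamma} produces a \emph{continuous} quasi-isometric embedding $\sigma\colon\mathbb{R}\to X$, that is, a continuous quasi-geodesic. Moreover, specializing the pointwise growth bound $\mu(B(x,r))\leq C_0 r^N$ (valid for all $x\in X$ and $r\geq R_0$) to the point $x=\sigma(0)$ gives exactly the large-scale volume condition~\eqref{lower:volume:growth}. Thus $X$ meets all three requirements placed on the source space in Theorem~\ref{t:main_qi_embed}.

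Next I would check the three conditions imposed on $Y$. Properness is part of the assumption. For local $Q$-bounded geometry: a proper metric space is $\sigma$-compact, hence separable and locally compact, while $Q$-Loewner spaces are pathwise connected by definition; Ahlfors $Q$-regularity yields $\mu_Y(B_R)\leq C R^Q$, which is condition~(1) of Definition~\ref{bounded geometry} (with a harmless adjustment of the constant for radii comparable to $\mathrm{diam}\,Y$ in the bounded case); and condition~(2) holds with $\phi=\phi_Q$, the Loewner function of $Y$, which is strictly positive by the $Q$-Loewner hypothesis and non-increasing because the infimum defining $\phi_Q(t)$ is taken over a larger family of continua as $t$ grows. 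Indeed, $M_Q(\Gamma_{E,F})\geq\phi_Q(t)$ for \emph{any} continua $E,F\subseteq Y$ with $\mathrm{dist}(E,F)\leq t\min\{\mathrm{diam}\,E,\mathrm{diam}\,F\}$, in particular for pairs contained in a ball, so any relatively compact neighbourhood $U$ works. Finally, $\lim_{t\to 0}\phi_Q(t)=\infty$ is precisely Theorem~\ref{t:growth}, which applies since $Y$ is Ahlfors $Q$-regular and $Q$-Loewner.

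Having verified both sets of hypotheses, Theorem~\ref{t:main_qi_embed} gives that $X$ and $Y$ are not quasiconformally equivalent; as $Y$ was an arbitrary proper Ahlfors $Q$-regular $Q$-Loewner space, the corollary follows. The steps that are more than bookkeeping are the use of Lemma~\ref{continuous:gamma} — this is where the length-space hypothesis on $X$ enters, to guarantee a \emph{continuous} quasi-geodesic — and the identification of a proper Ahlfors $Q$-regular $Q$-Loewner space as one of locally $Q$-bounded geometry whose Loewner function blows up at $0$; the latter is essentially unwinding the definitions together with Theorem~\ref{t:growth}, and it is the place where I would expect to have to be most careful.
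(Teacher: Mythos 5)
Your proposal is correct and follows essentially the same route as the paper: reduce to Theorem~\ref{t:main_qi_embed}, invoking Lemma~\ref{continuous:gamma} to upgrade the quasi-isometric embedding to a continuous quasi-geodesic and Theorem~\ref{t:growth} to get the blow-up of the Loewner function. The only difference is that you spell out why a proper Ahlfors $Q$-regular $Q$-Loewner space is of locally $Q$-bounded geometry, a point the paper leaves implicit; your verification of it is sound.
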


\begin{proof}
This is an immediate consequence of Theorem \ref{t:main_qi_embed} if we apply Theorem \ref{t:growth} and Lemma \ref{continuous:gamma}.
Indeed, from Theorem \ref{t:growth}  we get that  the Loewner function blows up at the origin. 
To get a continuous quasi-geodesic we use Lemma \ref{continuous:gamma}, since $X$ is a length space.
\end{proof}

\section{Examples for the sharpness of the assumptions}\label{examples}
In this section we provide several examples to illustrate the sharpness of the assumptions in Theorem \ref{t:main_qi_embed}.
The examples are inspired by  \cite[p.253]{MR1886626} and consist of pairs of planar domains, and we will use the obvious identification between $\mathbb{C}$ and $\mathbb{R}^2$ in our notation.

\begin{ex}[Quasi-geodesic]
Set $X=\{(x,y)\in \mathbb{C}:\; 0\leq x\text{ and }0\leq y \leq \pi\}$ and $Y=\{(x,y)\in \mathbb{C}:\;0\leq y\text{ and }x^2+y^2\geq 1\} $. 
 The space $X$ endowed with the two-dimensional Lebesgue measure is of locally $2$-bounded geometry, and the same holds true for $Y$. At large scale, $X$ has linear volume growth. The space $Y$ on the other hand, is a proper $2$-regular $2$-Loewner space. The only assumption of Theorem \ref{t:main_qi_embed} not fulfilled in this situation is the presence of a quasi-geodesic in $X$. A quasiconformal, in fact a conformal, map of $X$ onto $Y$ is provided by the exponential function $f(z)=e^z$. The example shows the necessity of this condition and it also illustrates that one cannot replace the assumption ``$X$ contains a quasi-geodesic'' by ``$Y$ contains a quasi-geodesic''.
\end{ex}

\begin{ex}[Properness]
Set $X=\{(x,y)\in \mathbb{C}:\;  0\leq y \leq \pi\}$ and $Y=\{(x,y)\in \mathbb{C}:\;y\geq 0\}\setminus\{(0,0)\} $ with $f(z)=e^z$, so that $X$ and $Y$ are quasiconformally equivalent. In this example, $X$ does contain a quasi-geodesic and it is further a space of locally $2$-bounded geometry with linear volume growth at large scale. The space $Y$ is a $2$-Loewner space of locally $2$-bounded geometry but not proper as it does not contain the origin.
\end{ex}

\begin{ex}[Asymptotic behavior for the Loewner function]
\label{ex:loewner}
 Set $X:=\{(x,y)\in \mathbb{C}:\;  -1\leq y \leq 1\}$ and $Y=f_{\lambda}(X)$, where $f_{\lambda}$ for a given $\lambda\in (1,2)$ denotes the radial stretch map $f_{\lambda}(z)=z|z|^{\frac{\lambda-1}{2-\lambda}}$,   hence the two spaces are quasiconformally equivalent. Both $X$ and $Y$ are of locally $2$-bounded geometry. The assumption which is not fulfilled in this case is the condition on the asymptotic behavior of $\phi_2$. To see this, notice first that, for some $a>0$,
\begin{equation}\label{eq:shapeY}
Y\subset  \left[-a,a\right]^2 \cup \{(x,y)\in \mathbb{C}:\;- a |x|^{\lambda-1}\leq y\leq a |x|^{\lambda-1}\}.
\end{equation}
The space $Y$ contains the real axis as a quasi-geodesic and \eqref{eq:shapeY} shows that there exist constants $R_0>0$ and $C_0>0$ such that
\begin{displaymath}
\mathcal{L}^2(B(0,r)) \leq C_0 r^{\lambda}\quad \text{for all }r\geq R_0.
\end{displaymath}
Hence it follows from Proposition \ref{p:bdd_loewner} that the $2$-Loewner function of the space $Y$ is bounded.
\end{ex}

\section{Application to the sub-Riemannian Heisenberg and roto-translation groups}\label{Application}

In this section, we are going to prove Corollary \ref{RTvsH} as an application  of Corollary \ref{c:main_thm}, i.e., we show that the sub-Riemannian Heisenberg group and the sub-Riemannian roto-translation group are not quasiconformally equivalent.

\subsection{Sub-Riemannian Lie groups}
We briefly recall some notions from sub-Riemannian geometry in the particular case of Lie groups.
We consider a Lie group $G$ together with a left-invariant Riemannian metric $\langle\cdot,\cdot\rangle$ and with a left-invariant bracket-generating subbundle $\Delta$ of the tangent bundle of $G$.
We equip the groups $G$ with
the Carnot-Carath\'eodory distance $d$ associated to  $\Delta$  and $\langle\cdot,\cdot\rangle$, defined as,
for all $p,q\in G$,
 $$d( p,q):= \inf\left\{ \int_0^1\norm{\dot \gamma(t)} {\rm d}t : \gamma\in C^1([0,1],G), \dot\gamma(t)\in\Delta_{\gamma(t)},  \; ^{\gamma(0)=p}_{  \gamma(1)=q}\right\}.$$
We remark that the above definition only depends on the values of
 $\langle\cdot,\cdot\rangle$
on $\Delta$. Moreover, since $\Delta$ is bracket generating, the distance $d$ is finite, geodesic, and induces the manifold topology.

\subsection{The sub-Riemannian Heisenberg group}

We will choose the following coordinates for the Heisenberg group.
\begin{defn}
The \emph{Heisenberg group} $\mathbb{H}^1$ is $\mathbb{R}^3$ endowed with the group law
\begin{displaymath}
(x,y,t)(x',y',t')=(x+x',y+y',t+t'-2yx'+2xy').
\end{displaymath}
\end{defn}
The left-invariant subbundle $\Delta$ of the tangent-bundle is spanned by the    frame
\begin{displaymath}
X=\frac{\partial}{\partial x}+2y\frac{\partial}{\partial t},\quad  Y=\frac{\partial}{\partial y}-2x\frac{\partial}{\partial t}.
\end{displaymath}
 Notice that $\Delta$ is bracket-generating and is the kernel of the contact form
\begin{displaymath}
\beta= dt - 2ydx + 2xdy
\end{displaymath}

Consider $\langle \cdot,\cdot \rangle$  any inner product that makes $X,Y$ orthonormal, and  define the  Carnot-Carath\'eodory distance $d_{H}$.


In these coordinates, a Haar measure for $\mathbb{H}^1$ is simply the Lebesgue measure. 

\begin{thm}\label{p_loc_geo_Heis}
$(\mathbb{H}^1,\mathcal{L}^3,d_{H})$ is a
proper Ahlfors $4$-regular $4$-Loewner space, hence it is of locally  $4$-bounded geometry.
\end{thm}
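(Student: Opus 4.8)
The plan is to verify the three properties---properness, Ahlfors $4$-regularity, and the $4$-Loewner condition---and then invoke the already-recalled Theorem~\ref{t:growth} to conclude that the locally $4$-bounded geometry follows, since Ahlfors regularity gives condition (1) of Definition~\ref{bounded geometry} while the Loewner property together with Theorem~\ref{t:growth} gives a positive decreasing $\phi$ with $\phi(t)\to\infty$ for condition (2). First I would establish \emph{properness}: the Carnot--Carath\'eodory distance $d_H$ induces the standard manifold topology on $\mathbb{R}^3$ (as noted in the paragraph on sub-Riemannian Lie groups), and $d_H$ is left-invariant and continuous, hence proper closed balls follow once one knows closed balls are bounded in the Euclidean sense; this is the Hopf--Rinow theorem for the complete length space $(\mathbb{H}^1,d_H)$, or alternatively a direct consequence of the ball-box estimate $d_H(0,(x,y,t))\simeq |x|+|y|+|t|^{1/2}$ near a point together with left-invariance, which shows $d_H$-bounded sets are Euclidean-bounded and $d_H$-closed sets are Euclidean-closed.

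Next I would treat \emph{Ahlfors $4$-regularity}. Using the left-invariance of $d_H$ and of Lebesgue measure (which is a Haar measure), together with the parabolic dilations $\delta_\lambda(x,y,t)=(\lambda x,\lambda y,\lambda^2 t)$, which are group automorphisms satisfying $d_H(\delta_\lambda p,\delta_\lambda q)=\lambda\, d_H(p,q)$ and scale Lebesgue measure by $\lambda^4$, one reduces to a single ball $B(0,1)$: for any $R>0$,
\begin{displaymath}
\mathcal{L}^3(B(p,R))=\mathcal{L}^3(B(0,R))=\mathcal{L}^3(\delta_R B(0,1))=R^4\,\mathcal{L}^3(B(0,1)),
\end{displaymath}
and $\mathcal{L}^3(B(0,1))$ is a finite positive constant since $B(0,1)$ is a nonempty open set with compact closure. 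This gives the two-sided bound with $C=\max\{\mathcal{L}^3(B(0,1)),\mathcal{L}^3(B(0,1))^{-1}\}$ for all $R>0$, in particular for $0<R<\operatorname{diam}\mathbb{H}^1=\infty$.

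For the \emph{$4$-Loewner} property, rather than reprove it from scratch I would cite the standard fact---originating in Heinonen--Koskela \cite{MR1654771} and discussed in \cite{bHe01}---that $\mathbb{H}^1$ with its CC metric and Lebesgue measure supports a $(1,4)$-Poincar\'e inequality; combined with the Ahlfors $4$-regularity just established, \cite[Theorem 5.12]{MR1654771} (the equivalence of the Loewner condition with Ahlfors regularity plus a Poincar\'e inequality on such spaces) yields that $(\mathbb{H}^1,\mathcal{L}^3,d_H)$ is $4$-Loewner, i.e.\ pathwise connected (clear, as $d_H$ is geodesic) with strictly positive Loewner function $\phi_4$. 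The main obstacle in a fully self-contained treatment would be precisely this last point: verifying the Poincar\'e inequality (or the Loewner estimate directly) genuinely uses the sub-Riemannian structure---one needs good families of horizontal curves connecting points, which is where the Chow--Rashevskii accessibility and the geometry of $\Delta$ enter---so I would present it as a citation rather than attempt to reproduce the argument here. Once these three properties are in hand, the final clause ``hence it is of locally $4$-bounded geometry'' is immediate from Theorem~\ref{t:growth} as explained above, completing the proof.
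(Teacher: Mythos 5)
Your proposal is correct and follows essentially the same route as the paper, which simply cites Heinonen's book (Theorems 9.27 and 9.10 of \cite{bHe01}) for exactly the combination you describe: the Poincar\'e inequality plus Ahlfors regularity yielding the Loewner property, with properness and $4$-regularity being routine consequences of the dilation and translation structure. Your only (harmless) overstatement is that condition (2) of Definition \ref{bounded geometry} needs $\phi(t)\to\infty$; strict positivity of the Loewner function already suffices there, and the blow-up from Theorem \ref{t:growth} is only needed later for the hypothesis on $Y$ in Theorem \ref{t:main_qi_embed}.
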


\begin{proof}
See Theorem 9.27 together with Theorem 9.10 in \cite{bHe01}.
\end{proof}

\subsection{The sub-Riemannian roto-translation group}

Another example of a sub-Riemannian structure  on $\mathbb{R}^3$ is provided by the roto-translation group, which plays a prominent role in modeling visual perception.

\begin{defn}
The \emph{roto-translation group} $\mathcal{RT}$  is $\mathbb{R}^3$ endowed with the group law
\begin{equation}\label{eq:rt_eq}
(x,y,\theta) \cdot (x',y',\theta'):=  \left(\begin{pmatrix}x\\y\end{pmatrix}+\begin{pmatrix}\cos \theta&-\sin \theta\\ \sin \theta & \cos \theta\end{pmatrix}\begin{pmatrix}x'\\y'\end{pmatrix},\theta+\theta'\right).
\end{equation}
\end{defn}

\begin{rem} Notice that the space $\mathcal{RT}$ defined as above is actually the universal covering space of what is occasionally called  \emph{roto-translation group} in the literature, namely the group of orientation-preserving isometries of $\mathbb{R}^2$, also denoted as $SE(2)$ in the literature. The latter is diffeomorphic to $\mathbb{R}^2 \times S^1$.
\end{rem}

Let
$\Delta$ be the     subbundle generated by the left-invariant vector fields
\begin{displaymath}
X= \cos \theta \frac{\partial}{\partial x}+\sin \theta \frac{\partial}{\partial y},\quad Y=\frac{\partial}{\partial \theta}.
\end{displaymath}
Equivalently,   $\Delta$ is the kernel of the contact form
\begin{displaymath}
\alpha= \sin \theta dx -\cos \theta dy.
\end{displaymath}
Note that $\Delta$ is   bracket-generating since
$$[X,Y]= \sin \theta \frac{\partial}{\partial x}-\cos \theta \frac{\partial}{\partial y},$$
which is linearly independent from $X$ and $Y$ at every point.

Given   an inner product $\langle \cdot,\cdot \rangle$    that makes $X,Y$ orthonormal, we  define the  Carnot-Carath\'eodory distance $d_{RT}$.


For a fixed element  $(x,y,\theta)\in \mathcal{RT}$, let
$L_{(x,y,\theta)}:\mathcal{RT}\to\mathcal{RT}$  be the left translation with respect to the group law defined in \eqref{eq:rt_eq}. Its differential has the form
\begin{displaymath}
dL_{(x,y,\theta)}=\begin{pmatrix}\cos \theta&-\sin \theta&0\\\sin \theta&\cos \theta&0\\0&0&1\end{pmatrix}.
\end{displaymath}
In particular the Jacobian of each left translation is equal to $1$, which proves that the Lebesgue measure $\mathcal{L}^3$  is a Haar measure on $\mathcal{RT}$, in these coordinates.

\subsection{Small and large scale geometry}

At small scale, $\mathcal{RT}$ behaves like the sub-Riemannian Heisenberg group.
Actually, from the differential viewpoint the   subbundle in  $\mathcal{RT}$  is globally equivalent to the one in $\mathbb H^1$.
Kirsi Peltonen made us aware of the following fact.
\begin{lem}
\label{l:contacto} The manifolds $(\mathcal{RT},\alpha)$ and $(\mathbb{H}^1,\beta)$ are globally contactomorphic.
\end{lem}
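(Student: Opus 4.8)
I would produce an explicit global diffeomorphism $\Phi:\mathcal{RT}\to\mathbb{H}^1$ with $\Phi^*\beta=-\alpha$, hence with $d\Phi(\ker\alpha)=\ker\beta$; a harmless sign correction then upgrades this to $\Phi^*\beta=\alpha$ if one wants a coorientation-preserving (strict) contactomorphism. The route is to show that each of $(\mathcal{RT},\ker\alpha)$ and $(\mathbb{H}^1,\ker\beta)$ is \emph{globally} contactomorphic to $\mathbb{R}^3$ carrying the Darboux form $\eta_0:=dc-b\,da$ in linear coordinates $(a,b,c)$, and then to compose.

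The Heisenberg side is routine: the map $\Phi_1(x,y,t):=(x,4y,t+2xy)$ is a diffeomorphism of $\mathbb{R}^3$, and a one-line computation gives $\Phi_1^*\eta_0=dt-2y\,dx+2x\,dy=\beta$, so $\Phi_1$ is a global contactomorphism $(\mathbb{H}^1,\ker\beta)\to(\mathbb{R}^3,\ker\eta_0)$ --- this is just the classical fact that the Heisenberg contact structure is the standard one on $\mathbb{R}^3$.

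The substantive part is the roto-translation side, and here is where the main (modest) difficulty lies: one cannot merely invoke Darboux's theorem, since that normalizes $\alpha$ only locally, and the obvious local normalization fails globally. Indeed, where $\cos\theta\neq0$ one has $\alpha=-\cos\theta\,(dy-\tan\theta\,dx)$, which suggests the coordinates $(x,\tan\theta,y)$; but $\theta\mapsto\tan\theta$ is neither injective nor everywhere defined, and this is fatal precisely because $\mathcal{RT}$ is the \emph{universal cover} of $SE(2)$, so $\theta$ genuinely ranges over all of $\mathbb{R}$. The remedy is to retain $\theta$ as a coordinate: put $a:=\theta$ and $c:=y\cos\theta-x\sin\theta$, the signed distance of the origin from the affine line encoded by $(x,y,\theta)$. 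Requiring the pullback $dc-b\,da$ to have vanishing $d\theta$-component forces $b:=\partial_\theta c=-(x\cos\theta+y\sin\theta)$; since $\partial_x c=-\sin\theta$ and $\partial_y c=\cos\theta$, the remaining terms give $dc-b\,da=-\sin\theta\,dx+\cos\theta\,dy=-\alpha$. Finally $\Psi(x,y,\theta):=(a,b,c)$ is a global diffeomorphism of $\mathbb{R}^3$, because for each fixed $\theta$ the map $(x,y)\mapsto(b,c)$ is linear with constant determinant $-1$; thus $\Psi$ is a global contactomorphism $(\mathcal{RT},\ker\alpha)\to(\mathbb{R}^3,\ker\eta_0)$.

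Combining, $\Phi:=\Phi_1^{-1}\circ\Psi:\mathcal{RT}\to\mathbb{H}^1$ is a diffeomorphism with $\Phi^*\beta=\Psi^*(\Phi_1^{-1})^*\beta=\Psi^*\eta_0=-\alpha$, so $\ker\alpha$ and $\ker\beta$ correspond and the two contact manifolds are globally contactomorphic; precomposing with $(x,y,\theta)\mapsto(-x,-y,\theta)$, which pulls $\alpha$ back to $-\alpha$, produces a strict contactomorphism $\Phi^*\beta=\alpha$. The only points needing genuine care are verifying that $\Phi_1$ and $\Psi$ are diffeomorphisms of \emph{all} of $\mathbb{R}^3$ (rather than local charts) --- which is why choosing $c$ linear in $(x,y)$ is the right move --- while the two pullback identities are elementary differentiations.
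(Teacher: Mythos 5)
Your proof is correct, and all the computations check out: $\Phi_1^*(dc-b\,da)=\beta$, $\Psi^*(dc-b\,da)=-\alpha$, and both maps are global diffeomorphisms of $\mathbb{R}^3$ (the Heisenberg one by explicit inversion, the roto-translation one because $(x,y)\mapsto(b,c)$ is a rotation-plus-reflection depending smoothly on $\theta$, with constant determinant $-1$). The sign fix via $(x,y,\theta)\mapsto(-x,-y,\theta)$ is also fine, since that map pulls $\alpha$ back to $-\alpha$. The paper proceeds differently in presentation only: it writes down a single explicit map $f(x,y,\theta)=(-x\cos\theta-y\sin\theta,\ \theta,\ 4x\sin\theta-4y\cos\theta-2x\theta\cos\theta-2y\theta\sin\theta)$ and verifies by direct computation that $f^*\beta=4\alpha$ (a positive conformal factor, which of course preserves the kernel). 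Your factorization through the Darboux model $(\mathbb{R}^3,\,dc-b\,da)$ is essentially the same construction unpacked --- indeed the first two components of the paper's $f$ are exactly your $b$ and $a$ --- but it has the advantage of explaining where the formula comes from: the choice $c=y\cos\theta-x\sin\theta$ with $b=\partial_\theta c$ is forced by killing the $d\theta$-component of the pullback, and the linearity of $(x,y)\mapsto(b,c)$ for fixed $\theta$ is what makes the normalization global rather than merely a Darboux chart. The paper's version buys brevity (one formula, one computation); yours buys transparency and makes the global invertibility essentially automatic.
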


\begin{proof}
A contactomorphism $f:(\mathcal{RT},\alpha)\to (\mathbb{H}^1,\beta)$ is given in the above coordinates by
\begin{displaymath}
f(x,y,\theta)=(-x\cos \theta  -y\sin \theta , \theta, 4x \sin \theta -4y\cos \theta-2x\theta \cos \theta-2y\theta \sin \theta).
\end{displaymath}
A direct computation shows that this is an invertible map with $f^{\ast}\beta = 4 \alpha$.
\end{proof}

Since now we have a smooth map for which
$ f_*\Delta_{RT}=\Delta_{H}$, we also have that
$ f^*|_{\Delta_{H}} \langle\cdot,\cdot\rangle_{H}|_{\Delta_{H}} $ is a smooth multiple of $ \langle\cdot,\cdot\rangle_{RT}|_{\Delta_{RT}}.$
Hence, with respect to the corresponding sub-Riemannian distances, $f$ is locally Lipschitz.
Thus we have the following two consequences.


\begin{cor}\label{c:biLIp}
There exists a homeomorphism between
$(\mathcal{RT},d_{RT})$ and $(\mathbb{H}^1,d_{H})$ that is   bi-Lipschitz on compact sets.
\end{cor}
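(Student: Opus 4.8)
The plan is to show that the contactomorphism $f$ of Lemma~\ref{l:contacto} is itself the required homeomorphism. First I would observe that, being a smooth diffeomorphism of $\mathbb{R}^{3}$, the map $f$ is in particular a homeomorphism between $(\mathcal{RT},d_{RT})$ and $(\mathbb{H}^{1},d_{H})$, since both Carnot--Carath\'eodory distances induce the manifold topology. The metric information is encoded in the computation preceding the statement: $df(\Delta_{RT})=\Delta_{H}$, and $f^{\ast}\big(\langle\cdot,\cdot\rangle_{H}|_{\Delta_{H}}\big)=\lambda\,\langle\cdot,\cdot\rangle_{RT}|_{\Delta_{RT}}$ for a smooth, strictly positive function $\lambda$ on $\mathcal{RT}$. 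Consequently $f$ sends $\Delta_{RT}$-horizontal curves to $\Delta_{H}$-horizontal curves, and for any absolutely continuous horizontal $\gamma$ one has $\mathrm{length}_{H}(f\circ\gamma)=\int\sqrt{\lambda(\gamma(t))}\,\norm{\dot\gamma(t)}_{RT}\,\mathrm{d}t$; the symmetric statement holds for $f^{-1}$, with $\lambda$ replaced by $1/(\lambda\circ f^{-1})$.

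Next comes the core length-comparison step. Fix a compact set $K\subseteq\mathcal{RT}$ and put $D:=\mathrm{diam}_{d_{RT}}(K)<\infty$. Since $\mathcal{RT}$ is proper and geodesic (completeness of the left-invariant sub-Riemannian structure together with local compactness, via Hopf--Rinow), the closed $D$-neighbourhood $\widetilde{K}$ of $K$ is compact, and every $d_{RT}$-minimizing curve joining two points of $K$ stays inside $\widetilde{K}$. Set $M:=\max_{\widetilde{K}}\lambda<\infty$. For $p,q\in K$ and any $\eps>0$, pushing forward an $\eps$-nearly minimizing horizontal curve $\gamma$ from $p$ to $q$ lying in $\widetilde{K}$ gives $d_{H}(f(p),f(q))\le\mathrm{length}_{H}(f\circ\gamma)\le\sqrt{M}\,\mathrm{length}_{RT}(\gamma)\le\sqrt{M}\,(d_{RT}(p,q)+\eps)$, hence $d_{H}(f(p),f(q))\le\sqrt{M}\,d_{RT}(p,q)$. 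Applying the identical argument to the smooth diffeomorphism $f^{-1}$ and to the compact set $f(K)\subseteq\mathbb{H}^{1}$ --- which is legitimate because $\mathbb{H}^{1}$ is proper and geodesic by Theorem~\ref{p_loc_geo_Heis} --- produces a constant $C'$ with $d_{RT}(p,q)\le C'\,d_{H}(f(p),f(q))$ for all $p,q\in K$. Combining the two inequalities shows that $f|_{K}$ is bi-Lipschitz, which is the assertion.

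The only point I expect to require genuine care is the one just highlighted: a distance-minimizing curve between two points of $K$ need not remain in $K$, so one cannot simply bound $\lambda$ on $K$; this forces the passage to the enlarged compact set $\widetilde{K}$, and is where properness of both $\mathcal{RT}$ and $\mathbb{H}^{1}$ is genuinely used. Everything else is routine: smoothness and strict positivity of $\lambda$ give uniform two-sided bounds on compact sets, and the passage from an infinitesimal length distortion to a distance estimate is the standard one on a geodesic space.
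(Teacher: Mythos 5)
Your proposal is correct and follows essentially the same route as the paper: the paper likewise takes the contactomorphism $f$ of Lemma~\ref{l:contacto}, notes that $f_*\Delta_{RT}=\Delta_{H}$ and that the pulled-back horizontal metric is a smooth (hence locally bounded, locally bounded away from zero) multiple of the original, and concludes that $f$ and $f^{-1}$ are locally Lipschitz and therefore bi-Lipschitz on compact sets. Your write-up only makes explicit the standard passage from the infinitesimal length-distortion bound to a distance bound on a compact set, via near-minimizing horizontal curves confined to an enlarged compact neighbourhood, which the paper leaves implicit.
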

In fact, the infinitesimal geometry of our spaces is the same. Namely, $\mathbb{H}^1$ is the metric tangent cone of $\mathcal{RT}$ \cite[p. 52]{MR1421822}.

From Corollary \ref{c:biLIp} we have that there exists a constant $L$ for which the unit ball at the origin in
$\mathcal{RT}$
is $L$-bi-Lipschitz equivalent to some neighborhood $U$ of the origin in
$\mathbb{H}^1$. Since $\mathcal{RT}$ is isometrically homogeneous, {\em any} unit ball  in
$\mathcal{RT}$
is $L$-bi-Lipschitz equivalent to  $U$. Therefore, from Theorem \ref{p_loc_geo_Heis} we deduce the next consequence.

\begin{cor}\label{bdd:geo:RT}
$(\mathcal{RT},d_{RT},\mathcal{L}^3)$ is of locally $4$-bounded geometry.
\end{cor}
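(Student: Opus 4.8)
The plan is to combine the local bi-Lipschitz equivalence recorded in Corollary~\ref{c:biLIp} with the homogeneity of $\mathcal{RT}$ to transport the locally $4$-bounded geometry of $\mathbb{H}^1$ (Theorem~\ref{p_loc_geo_Heis}) to $\mathcal{RT}$. First I would fix the $L$-bi-Lipschitz map $g$ from the closed unit ball $\overline{B}_{RT}(e,1)$ onto a neighborhood $U$ of the origin in $\mathbb{H}^1$; by Theorem~\ref{p_loc_geo_Heis} (shrinking $U$ if necessary) $U$ witnesses both conditions in Definition~\ref{bounded geometry} for $\mathbb{H}^1$, with constant $C_0$ and decreasing function $\phi$. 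Then, given an arbitrary point $p\in\mathcal{RT}$, left translation $L_p$ is a $d_{RT}$-isometry (the distance is left-invariant by construction) with unit Jacobian (computed in the text, $\det dL_p=1$), so $\mathcal{L}^3$ is $L_p$-invariant; hence $L_p\bigl(\overline{B}_{RT}(e,1)\bigr)=\overline{B}_{RT}(p,1)$ is an $L$-bi-Lipschitz, measure-scaling copy of $U$, and I would take $U_p:=\mathrm{int}\,\overline{B}_{RT}(p,1)$ (or a slightly smaller ball) as the required neighbourhood of $p$, which has compact closure since $d_{RT}$ is proper --- properness of $\mathcal{RT}$ follows because $d_{RT}$ is geodesic and $\mathcal{RT}$ is complete (Hopf--Rinow), or simply because $d_{RT}$ is locally bi-Lipschitz to $d_H$ which is proper.

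Next I would verify the two axioms inside $U_p$. For the volume bound: if $B_R\subset U_p$ is a $d_{RT}$-ball of radius $R$, then $g\circ L_p^{-1}(B_R)$ contains a $d_H$-ball of radius $R/L$ inside $U$ and is contained in one of radius $LR$, so $\mu_{RT}(B_R)=\mathcal{L}^3(B_R)=c\,\mathcal{L}^3\bigl(g\circ L_p^{-1}(B_R)\bigr)$ for a fixed constant $c$ depending only on the Jacobian bounds of $g$, and the $\mathbb{H}^1$-bound $\mathcal{L}^3\le C_0(\mathrm{radius})^4$ then gives $\mu_{RT}(B_R)\le C_0'R^4$ with $C_0'=cC_0L^4$. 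For the modulus/Loewner-type lower bound: a bi-Lipschitz homeomorphism between metric measure spaces distorts the $Q$-modulus of any curve family by a bounded factor (here $Q=4$), and it distorts $\mathrm{dist}$ and $\mathrm{diam}$ of continua by at most the factor $L$; so if $E,F\subset B_R\subset U_p$ are continua with $\mathrm{dist}(E,F)\le t\min\{\mathrm{diam}E,\mathrm{diam}F\}$, their images under $g\circ L_p^{-1}$ are continua in $U$ with the analogous inequality for $L^2t$, whence $M_4(\Gamma_{E,F})\ge C^{-1}M_4(\Gamma_{g(E),g(F)})\ge C^{-1}\phi(L^2t)=:\widetilde\phi(t)$, and $\widetilde\phi$ is again a decreasing positive function of $t$. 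The remaining hypotheses of Definition~\ref{bounded geometry} --- separability, pathwise connectedness, local compactness --- are immediate: $\mathcal{RT}$ is a connected smooth manifold with a distance inducing its topology.

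I expect the only genuine point requiring care to be the bi-Lipschitz quasi-invariance of the $Q$-modulus with explicit constants; but this is standard (an $L$-bi-Lipschitz map sends admissible densities to essentially admissible densities after rescaling by $L$, and changes $\int\rho^Q\,d\mu$ by at most the product of the $\mathcal{L}^3$-distortion and $L^Q$), and in any case it is a special case of Theorem~\ref{quasi-invariant:modulus} since a homeomorphism that is bi-Lipschitz on the relevant neighbourhood is in particular quasiconformal there. A minor bookkeeping issue is that $g$ is only defined on $\overline{B}_{RT}(e,1)$, so one should pass to a strictly smaller ball $U_p=B_{RT}(p,1/2)$ to be sure that balls $B_R\subset U_p$ and continua in them have images safely inside $U$; this costs nothing. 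Assembling these observations, every point of $\mathcal{RT}$ has a precompact neighbourhood satisfying (1) and (2) of Definition~\ref{bounded geometry} with the \emph{uniform} constant $C_0'$ and decreasing function $\widetilde\phi$, which is exactly the assertion that $(\mathcal{RT},d_{RT},\mathcal{L}^3)$ is of locally $4$-bounded geometry.
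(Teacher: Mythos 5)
Your argument is correct and is essentially the paper's own proof: the paper likewise combines the local bi-Lipschitz equivalence of Corollary \ref{c:biLIp} with the isometric homogeneity of $\mathcal{RT}$ (left translations preserve $d_{RT}$ and $\mathcal{L}^3$) to transfer both conditions of Definition \ref{bounded geometry} from $\mathbb{H}^1$, via Theorem \ref{p_loc_geo_Heis}. The paper leaves the routine verifications (volume comparison and bi-Lipschitz quasi-invariance of the $4$-modulus) implicit, which you have simply written out.
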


At large scale, $\mathcal{RT}$ behaves like the Euclidean space $(\mathbb{R}^3,d_E)$. Since there is a proper and co-compact action of $\mathbb{Z}^3$ by isometries on both $\mathcal{RT}$ and on $\mathbb{R}^3$, and these spaces are proper and geodesic, it follows by the Schwartz-Milnor Lemma that they are quasi-isometric (and quasi-isometric to $\mathbb{Z}^3$ with respect to a word metric). Below we give a more explicit proof.

\begin{prop}\label{id:QI}
The map $\mathrm{id}:(\mathbb{R}^3,d_E)\to (\mathbb{R}^3,d_{RT})$ is a quasi-isometry.
\end{prop}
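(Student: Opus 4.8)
The plan is to prove that $\mathrm{id}:(\mathbb{R}^3,d_E)\to(\mathbb{R}^3,d_{RT})$ is a quasi-isometry by producing, for all $p,q\in\mathbb{R}^3$, a two-sided comparison $L^{-1}d_E(p,q)-b\le d_{RT}(p,q)\le L\,d_E(p,q)+b$. The upper bound is the routine direction: by left-invariance of $d_{RT}$ it suffices to bound $d_{RT}(0,(x,y,\theta))$ from above in terms of the Euclidean norm of $(x,y,\theta)$, for instance by concatenating a horizontal path that first rotates $\theta$ along $Y$, then translates $(x,y)$ along (a rotated copy of) $X$; each admissible vector field has bounded Riemannian length, so $d_{RT}(0,(x,y,\theta))\le C(|\theta|+|x|+|y|)\le C'd_E(0,(x,y,\theta))$, giving the right-hand inequality even with $b=0$. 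The first step I would carry out is to record this upper bound carefully, using the explicit frame $X=\cos\theta\,\partial_x+\sin\theta\,\partial_y$, $Y=\partial_\theta$.

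The substantive direction is the lower bound: I need $d_E(p,q)\le L\,d_{RT}(p,q)+Lb$, equivalently that a short Carnot--Carath\'eodory path cannot move far in the Euclidean sense. Again by left-invariance (note that left translations in $\mathcal{RT}$ are Euclidean isometries composed with a shear in $\theta$, hence uniformly bi-Lipschitz for $d_E$ as well, since the rotation block is orthogonal and the $\theta$-coordinate is untouched), it suffices to estimate $d_E(0,(x,y,\theta))$ from above by $d_{RT}(0,(x,y,\theta))$ up to additive and multiplicative constants. Let $\gamma=(x(t),y(t),\theta(t)):[0,1]\to\mathcal{RT}$ be a horizontal curve from $0$ to $(x,y,\theta)$ with $\dot\gamma=a(t)X_{\gamma(t)}+c(t)Y_{\gamma(t)}$, so that $\dot\theta=c$ and $(\dot x,\dot y)=a(\cos\theta,\sin\theta)$; then its Riemannian length is $\int_0^1\sqrt{a^2+c^2}\,dt$, which equals $d_{RT}(0,(x,y,\theta))$ when $\gamma$ is a geodesic. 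Immediately $|\theta|=|\int_0^1 c|\le\int|c|\le \mathrm{length}(\gamma)$, controlling the $\theta$-coordinate. For the $(x,y)$-coordinates, $\sqrt{x^2+y^2}\le\int_0^1|a|\,dt\le\mathrm{length}(\gamma)$ as well, since $|(\dot x,\dot y)|=|a|$. Hence $d_E(0,(x,y,\theta))\le|\theta|+\sqrt{x^2+y^2}\le 2\,d_{RT}(0,(x,y,\theta))$, and the additive constant $b$ is in fact not even needed. Combining with left-invariance on both sides yields the lower bound globally.

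The one technical point I expect to require the most care is the reduction ``by left-invariance'' for the Euclidean side: $d_E$ is \emph{not} left-invariant for the $\mathcal{RT}$ group law, so I cannot simply translate $p$ to the origin and keep $d_E(p,q)$ unchanged. The fix is to observe from the formula $dL_{(x,y,\theta)}=\left(\begin{smallmatrix}\cos\theta&-\sin\theta&0\\\sin\theta&\cos\theta&0\\0&0&1\end{smallmatrix}\right)$ that every left translation $L_g$ is a Euclidean isometry (its differential is a constant orthogonal matrix), hence $d_E(L_g p, L_g q)=d_E(p,q)$ exactly; so the reduction is legitimate with no loss. Alternatively, and perhaps more cleanly, I would phrase everything in terms of lengths of curves directly: integrate $|\dot\theta|$ and $|(\dot x,\dot y)|$ along an arbitrary horizontal curve joining $p$ to $q$ to get $d_E(p,q)\le 2\,\mathrm{length}_{RT}(\gamma)$ and take the infimum over $\gamma$, which avoids invoking homogeneity at all. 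Either way, the proof is short; the main obstacle is simply being careful that the horizontal-curve length bounds $\int|\dot\theta|$ and $\int\sqrt{\dot x^2+\dot y^2}$ follow from the specific form of the frame $\{X,Y\}$ and not from any orthonormality assumption that has been lost.
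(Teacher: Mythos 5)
Your route is genuinely different from the paper's. The paper discretizes: it tiles $\mathbb{R}^3$ by translates of a fundamental domain under the lattice $\mathbb{Z}^2\times 2\pi\mathbb{Z}$, cuts a $d_{RT}$-geodesic into unit-length pieces, rounds the subdivision points to the lattice, and uses the fact that only finitely many lattice points lie in a bounded $d_{RT}$-ball to compare the two metrics (a Schwartz--Milnor-type argument that would work verbatim for any pair of left-invariant proper geodesic metrics admitting a common cocompact lattice). Your argument instead exploits the explicit frame, and your lower bound is correct and arguably cleaner than the paper's corresponding step: for a horizontal curve with $\dot\gamma=aX+cY$ one has $|(\dot x,\dot y)|=|a|$ and $|\dot\theta|=|c|$, so integrating and taking the infimum over horizontal curves gives $d_E(p,q)\le 2\,d_{RT}(p,q)$ for all $p,q$, with no invariance or discretization needed.

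The upper bound, however, is flawed as written, in two ways. First, the path you describe --- rotate to angle $\theta$, then flow along $X$ --- only reaches points of the form $(s\cos\theta,s\sin\theta,\theta)$, since from $(0,0,\theta)$ the only available planar direction is $(\cos\theta,\sin\theta)$. You need an intermediate angle: rotate to $\phi=\arg(x,y)$, flow along $X$ for time $\sqrt{x^2+y^2}$ to reach $(x,y,\phi)$, then rotate from $\phi$ to $\theta$; this costs at most $\sqrt{x^2+y^2}+|\theta|+2\pi$. Second, the claim that the upper bound holds ``even with $b=0$'' is false: $\Delta$ is a proper subbundle, so near the identity $d_{RT}$ is not Lipschitz-dominated by $d_E$. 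For instance $d_{RT}\bigl(0,(0,\epsilon,0)\bigr)\asymp\sqrt{\epsilon}$ as $\epsilon\to 0$, because $\partial_y$ is not horizontal at $\theta=0$ and is only generated by one bracket, while $d_E\bigl(0,(0,\epsilon,0)\bigr)=\epsilon$. The additive constant $2\pi$ produced by the corrected construction is therefore genuinely needed, but of course harmless for a quasi-isometry. With these two repairs, and with your correct observation that each left translation $L_g$ is a Euclidean isometry (so both bounds transport from the origin to arbitrary pairs of points), the proof goes through and yields explicit constants.
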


\begin{proof}
Let us denote
\begin{displaymath}
\Omega:= [0,1)\times [0,1)\times [0,2\pi).
\end{displaymath}
Left translation by an element $g\in \mathbb{Z}^2 \times 2\pi \mathbb{Z}$ with respect to the group law of the roto-translation group coincides with the usual translation by $g$ on the underlying Euclidean space and we have
\begin{displaymath}
\mathbb{R}^3 = \bigcup_{g\in \mathbb{Z}^2 \times 2\pi \mathbb{Z}} g \Omega.
\end{displaymath}
Given two points $p,p' \in \mathbb{R}^3$, let $\gamma:[0,d_{RT}(p,p')]\to (\mathbb{R}^3,d_{RT})$ be an arc-length parametrized geodesic joining them. Moreover, let $n\in \mathbb{N}$ be so that
\begin{displaymath}
d_{RT}(p,p')\leq n < d_{RT}(p,p')+1.
\end{displaymath}
Set $t_i:= \frac{d_{RT}(p,p')}{n} i$, for $i=0,\ldots,n$, and $p_i:=\gamma(t_i)$. In particular, $p_0=p$ and $p_n=p'$. Then
\begin{displaymath}
d_{RT}(p_i,p_{i-1})=d_{RT}(\gamma(t_i),\gamma(t_{i-1}))\leq |t_i-t_{i-1}|=\frac{d_{RT}(p,p')}{n}\leq 1,
\end{displaymath}
since $\gamma$ is a $1$-Lipschitz parametrization. Define
\begin{displaymath}
\lfloor \cdot \rfloor:\mathbb{R}^3 \to \mathbb{Z}^2 \times 2 \pi \mathbb{Z},\quad \lfloor (x,y,z) \rfloor := (\lfloor x \rfloor, \lfloor y \rfloor , 2\pi\lfloor \tfrac{z}{2\pi} \rfloor).
\end{displaymath}
Since the metrics $d_E$ and $d_{RT}$ are left-invariant with respect to the corresponding group laws, we have
\begin{displaymath}
d_{E} (p, \lfloor p \rfloor ) \leq \mathrm{diam}_{E} (\Omega) =: R_{E}\quad \text{and}\quad d_{RT} (p, \lfloor p \rfloor ) \leq \mathrm{diam}_{RT} (\Omega) =: R_{RT},
\end{displaymath}
for all $p\in \mathbb R^3$.
Notice that
\begin{displaymath}
d_{RT}(\lfloor p_i\rfloor, \lfloor p_{i-1} \rfloor)\leq
d_{RT}(\lfloor p_i\rfloor, p_i)+ d_{RT}(p_i,p_{i-1})+
d_{RT}(p_{i-1},\lfloor p_{i-1} \rfloor)\leq 2R_{RT}+1.\end{displaymath}
Hence, we have that
\begin{align*}
d_E(\lfloor p\rfloor, \lfloor p' \rfloor)&\leq \sum_{i=1}^n d_E (\lfloor p_i\rfloor, \lfloor p_{i-1} \rfloor)\leq n \cdot \sup\{d_E(\lfloor p_i\rfloor, \lfloor p_{i-1} \rfloor)\}=: n \cdot M_E,
\end{align*}
where the supremum is taken over all points with $d_{RT}(\lfloor p_i\rfloor, \lfloor p_{i-1} \rfloor)\leq 2R_{RT}+1$. As there are only finitely many points of $\mathbb{Z}^2 \times 2\pi \mathbb{Z}$ inside a ball centred at the origin, the finiteness of $M_E$ follows by left translation. By the choice of $n$, we conclude
\begin{displaymath}
d_E(\lfloor p\rfloor, \lfloor p' \rfloor) \leq M_E d_{RT}(p,p')+ M_E
\end{displaymath}
and thus
\begin{displaymath}
d_{E}(p,p') \leq d_E(\lfloor p\rfloor, \lfloor p' \rfloor) + 2 R_{E}\leq M_E d_{RT}(p,p')+ (M_E+ 2 R_E).
\end{displaymath}
Hence, we got one of the quasi-isometry bounds. The other one is obtained  with the same line of arguments 
 when the roles of $E$ and $RT$ are reverted.
\end{proof}

\begin{cor}\label{cubic:vol}
In the sub-Rie{\-}mannian roto-translation group $\mathcal{RT}$
the volume in the large is cubic, i.e., condition \eqref{lower:volume:growth} holds for $N=3$.
\end{cor}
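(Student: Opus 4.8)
The plan is to read the claimed bound off directly from the quasi-isometry of Proposition \ref{id:QI}. Written out for $\mathcal{RT}$ with $\mu=\mathcal{L}^3$ and $N=3$, condition \eqref{lower:volume:growth} asks for constants $R_0,C_0>0$ such that
\[
\mathcal{L}^3\big(B_{RT}(\sigma(0),r)\big)\leq C_0\, r^3\qquad\text{for all }r\geq R_0,
\]
that is, an upper bound on the Lebesgue measure of the Carnot--Carath\'eodory balls $B_{RT}(\cdot,r)$. Since $d_{RT}$ is left-invariant and $\mathcal{L}^3$ is a Haar measure on $\mathcal{RT}$ (the Jacobian of each left translation equals $1$), the volume of a $d_{RT}$-ball does not depend on its centre, so I may assume $\sigma(0)=0$ and argue with balls centred at the origin.

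Proposition \ref{id:QI} provides constants $A,B>0$ so that
\[
d_E(p,q)\leq A\,d_{RT}(p,q)+B\qquad\text{for all }p,q\in\mathbb{R}^3.
\]
First I would use only this one of the two quasi-isometry inequalities, as the reverse bound is not needed for an upper volume estimate. If $d_{RT}(0,p)\leq r$, then $d_E(0,p)\leq Ar+B$, so $B_{RT}(0,r)\subseteq B_E(0,Ar+B)$, where $B_E$ denotes a Euclidean ball. By monotonicity of Lebesgue measure together with the volume of a Euclidean ball in $\mathbb{R}^3$,
\[
\mathcal{L}^3\big(B_{RT}(0,r)\big)\leq \mathcal{L}^3\big(B_E(0,Ar+B)\big)=\tfrac{4\pi}{3}(Ar+B)^3.
\]

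To conclude, I would fix $R_0:=1$ and observe that for $r\geq R_0$ one has $Ar+B\leq(A+B)r$, whence $(Ar+B)^3\leq(A+B)^3 r^3$. Taking $C_0:=\tfrac{4\pi}{3}(A+B)^3$ gives $\mathcal{L}^3(B_{RT}(0,r))\leq C_0 r^3$ for all $r\geq R_0$, which is exactly \eqref{lower:volume:growth} with $N=3$. There is no genuine obstacle here: all the substance is already contained in Proposition \ref{id:QI}, and the remaining step is the elementary comparison of a $d_{RT}$-ball with a Euclidean ball of proportional radius. I note in passing that applying the reverse quasi-isometry inequality $d_{RT}(p,q)\leq A'd_E(p,q)+B'$ to the inclusion $B_E(0,(r-B')/A')\subseteq B_{RT}(0,r)$ would, by the same reasoning, yield a matching lower bound $\mathcal{L}^3(B_{RT}(0,r))\geq c\,r^3$ for large $r$; thus the volume is genuinely cubic, as the word ``cubic'' in the statement suggests, even though only the upper bound is required to verify \eqref{lower:volume:growth}.
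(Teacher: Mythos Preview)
Your proof is correct and follows essentially the same approach as the paper's own proof: both use Proposition~\ref{id:QI} to obtain an inclusion $B_{RT}(p,r)\subseteq B_E(p,Lr+b)$ and then invoke the cubic volume of Euclidean balls. Your version is simply more explicit about the constants and adds the (unnecessary but correct) remark on the matching lower bound.
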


\begin{proof}
Since $(\mathbb{R}^3,d_{RT})$ and the Euclidean space $\mathbb{R}^3$ are quasi-isometric, there exist constants $L\geq 1$ and $b\geq 0$ such that
\begin{displaymath}
B_{\mathcal{RT}}(p,r) \subseteq B_E(p,Lr+b),\quad\text{for all }p\in \mathbb{R}^3,\; r>0.
\end{displaymath}
By monotonicity the claim follows  from the cubic volume growth of Euclidean balls in $\mathbb{R}^3$ with respect to the Lebesgue measure.
\end{proof}

\begin{proof}[Proof of Corollary \ref{RTvsH}]
We  apply
 Corollary \ref{c:main_thm} with $X=\mathcal{RT}
$ and $Y=\mathbb{H}^1$.
Both $X$ and $Y$ have locally $4$-bounded geometry by Theorem \ref{p_loc_geo_Heis} and Corollary \ref{bdd:geo:RT}, respectively.
The space $X$ is geodesic and has, by Corollary \ref{cubic:vol}, property \eqref{lower:volume:growth} with exponent $3$, which is less than $4$.
Since $X$ is quasi-isometric to the Euclidean space by Proposition \ref{id:QI}, any line in $\mathbb R^3$ is a  quasi-geodesic for $d_{RT}$.
By Theorem~\ref{p_loc_geo_Heis}, the space $Y$ is a
proper Ahlfors $4$-regular $4$-Loewner space.
Thus Corollary \ref{c:main_thm} gives our claim. 
\end{proof}

%
%
%
%


\bibliographystyle{alpha}
\bibliography{qs}
\end{document}